\numberwithin{equation}{section}
\def\ca{{\mathcal A}}
\def\cb{{\mathcal B}}
\def\ce{{\mathcal E}}
\def\cf{{\mathcal F}}
\def\ch{{\mathcal H}}
\def\ck{{\mathcal K}}
\def\cs{{\mathcal S}}
\def\ga{{\mathfrak A}}
\def\gam{{\mathfrak M}}
\def\gn{{\mathfrak N}}
\def\gx{{\mathfrak X}}
\def\gz{{\mathfrak Z}}
\def\bc{{\mathbb C}}
\def\bg{{\mathbb G}}
\def\bm{{\mathbb M}}
\def\bn{{\mathbb N}}
\def\bt{{\mathbb T}}
\def\bz{{\mathbb Z}}
\def\a{\alpha}
\def\b{\beta}
\def\g{\gamma} \def\G{\Gamma}
\def\d{\delta}  \def\D{\Delta}
\def\eps{\varepsilon}
\def\m{\mu}
\def\n{\nu}
\def\r{\rho}
\def\s{\sigma} \def\S{\Sigma}
\def\f{\varphi} \def\F{\Phi}
\def\om{\omega} \def\Om{\Omega}
\def\id{\hbox{id}}
\newtheorem{thm}{Theorem}[section]
\newtheorem{lem}[thm]{Lemma}
\newtheorem{cor}[thm]{Corollary}
\newtheorem{prop}[thm]{Proposition}
\newtheorem{defin}[thm]{Definition}
\newtheorem{con}[thm]{Conjecture}
\def\ad{\mathop{\rm Ad}}
\def\aut{\mathop{\rm Aut}}
\newcommand{\ty}[1]{\mathop{\rm {#1}}}
\def\di{\mathop{\rm d}\!}
\newcommand{\nn}{\nonumber}
\newcommand{\ots}[1]{\otimes_{\mathop{\rm #1}}}
\def\idd{{1}\!\!{\rm I}}
\def\spec{\mathop{\rm spec}}
\begin{document}

\title[diagonal measures]
{a nonconventional ergodic theorem for quantum ``diagonal measures''}
\author{Francesco Fidaleo}
\address{Francesco Fidaleo,
Dipartimento di Matematica,
Universit\`{a} di Roma Tor Vergata, 
Via della Ricerca Scientifica 1, Roma 00133, Italy} \email{{\tt
fidaleo@mat.uniroma2.it}}


\begin{abstract}
We extend the Nonconventional Ergodic Theorem for generic measures by Furstenberg, to several situations of interest arising from quantum dynamical systems. We deal with the diagonal state canonically associated to the product state (i.e. quantum "diagonal measures"), or to convex combinations of diagonal measures for non ergodic cases. For the sake of completeness, we treat also the Nonconventional Ergodic Theorem for compact dynamical systems, that is when the unitary generating the dynamics in the GNS representation is almost
periodic. The Nonconventional Ergodic Theorem allows in a natural way to determine the limit of the three--point correlations, naturally relevant for the knowledge of the ergodic properties of a dynamical system.
\vskip 0.3cm
\noindent
{\bf Mathematics Subject Classification}: 46L55, 47A35, 37A55.\\
{\bf Key words}: Noncommutative dynamical systems, Ergodic theory, Multiple correlations.
\end{abstract}

\maketitle

\section{introduction}

The scope of the present paper is the generalisation to several situations arising from the  
non commutative setting, of the Nonconventional Ergodic Theorem of H. Furstenberg relative 
to diagonal measures (cf. \cite{Fu, Fu1}) for classical ergodic dynamical systems. Namely, we prove an 
ergodic theorem relative to possibly non invariant and non normal states, 
which is the generalisation of Theorem 3.1 of \cite{Fu1} and Theorem 4.2 of \cite{F3}. We deal with ergodic cases, and non ergodic ones as well.

Let $(\ga,\a,\om)$ be a $C^*$--dynamical system based on the  unital $C^*$--algebra 
$\ga$, the automorphism $\a$, and the invariant state $\om$. Suppose that the support $s(\om)$ of the state $\om$ in the bidual $\ga^{**}$, is in the centre $Z(\ga^{**})$. Notice that this condition is trivially satisfied in the classical case. Let
$\big(\ch_\om,\pi_{\om},U_\om,\Om\big)$ be the associated Gelfand--Naimark--Segal (GNS for short) covariant representation. The $C^*$--algebra 
$\gam:=\pi_{\om}(\ga)''\otimes _{\rm{max}}\pi_{\om}(\ga)'$ acts in a natural way on 
$\ch_\om\otimes\ch_\om$, and on $\ch_\om$. Then the vector state 
$$
A\otimes B\in\pi_{\om}(\ga)''\otimes _{\rm{max}}\pi_{\om}(\ga)'\mapsto\langle AB\Om,\Om\rangle
$$ 
is the quantum "diagonal measure" corresponding to the product state 
$$A\otimes B\in\pi_{\om}(\ga)''\otimes _{\rm{max}}\pi_{\om}(\ga)'\mapsto\langle A\Om,\Om\rangle
\langle B\Om,\Om\rangle\,.
$$
In addition, on $\gam$ is naturally acting the automorphism
$$
\g:=\ad{}\!_{U_\om^{k_1}}\otimes\ad{}\!_{U_\om^{k_2}}\,,
$$
where $k_1,k_2\in\bz$ are fixed integers.
The product state is automatically invariant under the action of $\b$. Conversely, the diagonal measure is in general, neither normal with respect to the product measure, nor invariant with respect to the action of $\b$. The nonconventional Ergodic Theorem proved in the present paper concerns the study of the convergence in the strong operator topology of the sequence of the Cesaro means
\begin{equation}
\label{eetuu}
\frac{1}{N}\sum_{n=0}^{N-1}
U_\om^{nk_1}XU_\om^{n(k_2-k_1)}
\end{equation} 
when $X$ belongs to the norm closed linear subspace generated of
$\pi_{\om}(\ga)''$ and $\pi_{\om}(\ga)'$ in $\cb(\ch_\om)$. We treat the ergodic case, that is when 
$U_\om^{n(k_2-k_1)}$ is ergodic, and the non ergodic case, with the additional condition 
\begin{equation}
\label{eetuu1u}
\pi_{\om}(\ga)'\cap\{U_\om^{k_2-k_1}\}'\subset Z(\pi_\om(\ga)'')\,,
\end{equation} 
and some natural restrictions on the integers $k_1,k_2\in\bz$. Notice that also \eqref{eetuu1u} is automatically satisfied in the classical situation. The non ergodic situation relies on reduction theory concerning the decomposition of a state with central support in the bidual into states which have still central support almost everywhere. Such a crucial result is available only in the separable situation. Thus, we restrict the analysis for the non ergodic situation, to separable 
$C^*$--dynamical systems.

The investigation of the limit of the Cesaro means in \eqref{eetuu} provides a nontrivial case for which the Entangled Ergodic Theorem (see \cite{AHO}, see also \cite{F1, F2} and the references cited therein) holds true. All such results concerning the behaviour of the Cesaro Means as before, allow us to investigate in a natural way, the limit of the three--point correlations for quantum dynamical systems. The reader is referred to \cite{AET, F3, F4, F2, NSZ} for the systematic treatment of the topic, and for a wide class of interesting situations. We list below the situations relative to the three--point correlations 
\begin{equation}
\label{tpc}
\left\{\frac1N\sum_{n=0}^{N-1}\om\left(A_0\a^{nk_1}(A_1)\a^{nk_2}(A_2)\right)\right\}
\end{equation}
under consideration in the present paper:
\begin{itemize}
\item[(i)] the support $s(\om)$ in the bidual $\ga^{**}$ is central, and $\om$ is ergodic for 
$\a^{k_2-k_1}$ (i.e. when $\Om_\om$ is the unique invariant vector, up to multiplication for a constant, for $U^{k_2-k_1}_\om$);
\item[(ii)]  for $k_1=kl$, $k_2=(k+1)l$, with central support $s(\om)$ and 
$\pi_\om(\ga)'\cap\{U_\om^l\}'\subset\gz_\om$, when $\ga$ is separable.
\end{itemize}
For the sake of completeness, we also consider the case when
\begin{itemize}
\item[(iii)] $(\ga,\a,\om)$ is compact (i.e. when $\ch_\om$ is generated by the eigenvectors of 
$U_\om$) without any further restriction.
\end{itemize}

\section{preliminaries}

\subsection{Notations}
Let $X$, $Y$ be linear spaces. The algebraic tensor product is 
denoted by $X\odot Y$. If $\ch$ and $\ck$ are Hilbert spaces, the 
Hilbertian tensor product, that is the completion of $\ch\odot \ck$
under the norm induced by the inner product  
$$
\langle x\otimes\xi,y\otimes\eta\rangle:=
\langle x,y\rangle\langle\xi,\eta\rangle\,,
$$
is denoted by $\ch\otimes \ck$.

Let $\{A_{\a}\}_{\a\in J}\subset\cb(\ch)$ be a net consisting of 
bounded operators acting on the Hilbert space $\ch$. If it converges 
to $A\in\cb(\ch)$ in the weak or strong operator topology, we write respectively
$$
\mathop{\rm w\!-\!lim}_{\a}A_{\a}=A\,,\quad
\mathop{\rm s\!-\!lim}_{\a}A_{\a}=A\,.
$$
Let $U$ be a unitary operator acting on $\ch$. Consider the resolution of the identity  
$\{E_U(\D)\,:\,\D\,\,\text{Borel subset of}\,\,\bt\}$ of $U$. Denote  
$E^U_{z}:=E_U(\{z\})$. Namely, $E^U_{z}$ is nothing but the selfadjoint 
projection on the eigenspace corresponding to the eigenvalue 
$z$ of $U$ in the unit circle $\bt$. 

The unitary $U$ is said to be {\it ergodic} if $E^U_{1}\ch$ is one 
dimensional. By von Neumann Mean Ergodic Theorem, being $U$ ergodic is equivalent to the 
existence of a unit vector $\xi_{0}\in\ch$ such that 
$$
\lim_{N\to+\infty}\frac{1}{N}\sum_{n=0}^{N-1}U^{n}\xi=
\langle\xi,\xi_{0}\rangle\xi_{0}\,,\quad \xi\in\ch\,,
$$
or equivalently,
$$
\lim_{N\to+\infty}\frac{1}{N}\sum_{n=0}^{N-1}\langle U^{n}\xi,\eta\rangle=
\langle\xi,\xi_{0}\rangle\langle\xi_{0},\eta\rangle\,,\quad \xi,\eta\in\ch\,.
$$
The unitary $U$ is said to be {\it almost periodic} if
$\ch=\ch_{\mathop{\rm ap}}^{U}$, $\ch_{\mathop{\rm ap}}^{U}$ being 
the closed subspace 
consisting of the vectors having relatively norm--compact orbit 
under $U$. It is seen in \cite{NSZ} that $U$ is almost periodic if and 
only if $\ch$ is generated by the eigenvectors of $U$. The set $\s_{\mathop{\rm pp}}(U)$ of all the eigenvalues of $U$ is referred as the {\it pure point spectrum}. 

In the present paper we consider only unital $C^*$--algebras $\ga$ with the unity $\idd\in\ga$. 
For an algebra $\ca$, $Z(\ca)$ denotes its {\it centre}.
Let $\f\in\ga^*$ be a positive functional. Consider the GNS representation 
$\big(\ch_\f,\pi_{\f},\F\big)$ (cf. \cite{T}, Section I.9)  
canonically associated to $\f$. The centre of the representation is defined as
$$
\gz_\f:=\pi_\f(\ga)''\cap\pi_\f(\ga)'\,.
$$
Denote $s(\f)\in\ga^{**}$ the support of the state $\f$ in the bidual.
It is well known that $s(\f)$ is central, that is $s(\f)\in Z(\ga^{**})$ if and only if 
$\F$ is 
separating for $\pi_\f(\ga)''$, see e.g. \cite{SZ}, Section 10.17. Denote $M:=\pi_\f(\ga)''$. The commutant von Neumann 
algebra is $M'\equiv \pi_\f(\ga)'$. When $s(\f)$ is central, it is possible to introduce the so called Tomita antilinear invoution densely defined as
$$
S_\F:A\F\in M\F\mapsto A^*\F\in\ch_\f\,.
$$
It can be shown that $S_\F$ is closable, with closure denoted again by $S_\F$ with an abuse of notation. We have for the polar decomposition, $S_\F=J_\F\D_\F^{1/2}$, where $\D_\F$ is the Tomita {\it modular operator} and $J_\F$ the Tomita {\it modular conjugation}. We recall the main properties to the modular conjugation used in the sequel: $J_\F\F=\F$, and 
\begin{equation}
\label{jei}
J_\F \pi_\f(\ga)''J_\F=\pi_\f(\ga)'\,, \quad J_\F CJ_\F=C^*\,,\,\,\, C\in\gz_\f\,.
\end{equation}
The reader is referred to \cite{SZ} and the literature cited therein, for the Tomita Modular Theory.

\subsection{Dynamical Systems} 
\label{ss22}
For a $C^*$--dynamical system we mean a triplet 
$\big(\ga,\a,\om\big)$ 
consisting of a $C^*$--algebra $\ga$ which we always suppose to have an identity $\idd$, an action $\a:g\in G\mapsto \a_g\in\aut(\ga)$ of the group $G$ by $*$--automorphisms (denoted simply as {\it automorphisms}) $\a$ of $\ga$, 
and a state $\om\in\cs(\ga)$ invariant under the action of $\a$. Mainly, the actions considered in the sequel are those of $\bz$, that is those determined by a single automorphism. We refer to such dynamical systems as {\it discrete}, or simply $C^*$--dynamical systems when there is no matter of confusion.

A $C^*$--dynamical system $\big(\ga,\a,\om\big)$
is said to be {\it ergodic} if $\om\in\partial\ce_G(\ga)$, that is it is extremal in the convex, compact (in the $*$--weak topology) set $\ce_G(\ga)$ of the invariant states. Denote 
$\big(\ch_\om,\pi_{\om},U_\om,\Om\big)$ the GNS covariant representation 
(cf. \cite{T}, Section I.9)  
canonically associated to the dynamical system under 
consideration. Suppose for simplicity that $G=\bz$. It can be shown that if $s(\om)\in Z(\ga^{**})$, then the state $\om$ is ergodic if and only if
$$
\lim_{N\to+\infty}\frac{1}{N}\sum_{n=0}^{N-1}\om(A\a^{n}(B))=
\om(A)\om(B)\,,\quad A,B\in\ga\,,
$$
or equivalently if and only if $U_\om$ is ergodic. The reader is referred to 
\cite{BR} (cf. Theorem \ref{br1} below) and the reference cited therein for further details. In the general situation, denote $E_\om$ the orthogonal projection onto the subspace of the invariant vectors under the action of $G$:
$$
E_\om\ch_\om=\{\xi\in\ch_\om\mid U_\om(g)\xi=\xi\,,g\in G\}\,.
$$
Even if it is not directly used, we report the following result which has an interest in itself.
\begin{prop}
Let $\om\in\ce_{G}(\ga)$ such that $s(\om)\in Z(\ga^{**})$. If $\pi_\om(\ga)'\cap\{U_\om(G)\}'\subset\gz_\om$ then
$$
\pi_\om(\ga)'\cap\{U_\om(G)\}'=\gz_\om\cap\{U_\om(G)\}'\,.
$$
\end{prop}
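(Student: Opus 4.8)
The statement is an equality of two von Neumann subalgebras of $\cb(\ch_\om)$, so the plan is to prove the two inclusions separately and to pin down exactly where the standing hypothesis is consumed. Write $X=\pi_\om(\ga)'$, $Y=\{U_\om(G)\}'$, $Z=\gz_\om$, so that the claim reads $X\cap Y=Z\cap Y$ under the assumption $X\cap Y\subseteq Z$.

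The inclusion $Z\cap Y\subseteq X\cap Y$ is formal: by its very definition $\gz_\om=\pi_\om(\ga)''\cap\pi_\om(\ga)'\subseteq\pi_\om(\ga)'=X$, and intersecting both sides with $Y$ preserves the containment. For the reverse inclusion I would take $A\in X\cap Y$; the hypothesis $X\cap Y\subseteq\gz_\om$ places $A$ in $Z=\gz_\om$, while $A\in Y=\{U_\om(G)\}'$ by assumption, so $A\in Z\cap Y$. Thus the stated equality reduces to the tautology that $X\cap Y\subseteq Z\subseteq X$ forces $X\cap Y=Z\cap Y$, and for the proposition exactly as phrased neither ergodicity of $\om$ nor centrality of $s(\om)$ is actually needed.

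Because this argument is so short, I would expect the genuine content, and the reason the statement ``has an interest in itself'', to be the companion fact that the hypothesis is symmetric in $\pi_\om(\ga)''$ and $\pi_\om(\ga)'$, upgrading the equality to the three-fold identity $\pi_\om(\ga)''\cap\{U_\om(G)\}'=\gz_\om\cap\{U_\om(G)\}'=\pi_\om(\ga)'\cap\{U_\om(G)\}'$. Here the centrality of $s(\om)$ finally enters, since it makes $\Om$ separating for $\pi_\om(\ga)''$ and so puts the Tomita modular conjugation $J_\Om$ at my disposal, with $J_\Om\pi_\om(\ga)''J_\Om=\pi_\om(\ga)'$ and $J_\Om CJ_\Om=C^*$ for $C\in\gz_\om$ as in \eqref{jei}. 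Conjugating the hypothesis by $J_\Om$, and using that $J_\Om$ maps the $*$--algebra $\gz_\om$ onto itself, transports the inclusion $\pi_\om(\ga)'\cap\{U_\om(G)\}'\subseteq\gz_\om$ to the $\pi_\om(\ga)''$ side.

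The one step that is not purely formal, and hence the point I would treat most carefully, is that $J_\Om$ commutes with each $U_\om(g)$, so that $\ad_{J_\Om}$ fixes the set $\{U_\om(G)\}'$. I would extract this from invariance of $\om$: since $U_\om(g)\Om=\Om$ and $U_\om(g)\pi_\om(\,\cdot\,)U_\om(g)^*=\pi_\om(\a_g(\,\cdot\,))$, a direct check on the core $\pi_\om(\ga)\Om$ gives $U_\om(g)S_\Om U_\om(g)^*=S_\Om$, whence by uniqueness of the polar decomposition $S_\Om=J_\Om\D_\Om^{1/2}$ the unitary $U_\om(g)$ commutes with both $J_\Om$ and $\D_\Om^{1/2}$. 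Granting this, $J_\Om\big(\pi_\om(\ga)'\cap\{U_\om(G)\}'\big)J_\Om=\pi_\om(\ga)''\cap\{U_\om(G)\}'$, and the hypothesis forces the right-hand side into $\gz_\om$, closing the symmetric version; for the proposition as stated, however, only the two-line set-theoretic argument of the second paragraph is required.
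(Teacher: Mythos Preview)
Your two--line set--theoretic argument in the second paragraph is correct and is all that the proposition, read literally, requires: from $X\cap Y\subset Z\subset X$ one gets $X\cap Y\subset Z\cap Y\subset X\cap Y$, with no need for $s(\om)\in Z(\ga^{**})$.

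The paper does \emph{not} argue this way. It goes straight for the modular conjugation: citing that $J_\Om$ commutes with each $U_\om(g)$ (their reference is \cite{NSZ}, Proposition~3.3) and using \eqref{jei} together with the hypothesis $\pi_\om(\ga)'\cap\{U_\om(G)\}'\subset\gz_\om$, it computes
\[
\pi_\om(\ga)''\cap\{U_\om(G)\}'=J_\Om\big(\pi_\om(\ga)'\cap\{U_\om(G)\}'\big)J_\Om=\pi_\om(\ga)'\cap\{U_\om(G)\}'\,,
\]
and then says ``which leads to the assertion''. In other words, the paper actually establishes the symmetric identity you single out as the ``genuine content'', and only afterwards deduces the stated equality (by the same trivial inclusion you use). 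So your instinct about where the interest lies is exactly right: the paper's proof is really a proof of the three--fold identity $\pi_\om(\ga)''\cap\{U_\om(G)\}'=\gz_\om\cap\{U_\om(G)\}'=\pi_\om(\ga)'\cap\{U_\om(G)\}'$, and this is precisely where the hypothesis $s(\om)\in Z(\ga^{**})$ is consumed. Your sketch of that step (commutation of $J_\Om$ with $U_\om(g)$ via invariance of $S_\Om$, then conjugating the hypothesis across) matches the paper's line, except that the paper simply quotes the commutation from \cite{NSZ} rather than deriving it from the polar decomposition.
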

\begin{proof}
We start by noticing that $J_\Om$ commutes with $U_\om(g)$, $g\in G$, see \cite{NSZ}, Proposition 3.3. By \eqref{jei}, we obtain
\begin{align*}
&\pi_\om(\ga)''\cap\{U_\om(G)\}'=J_\Om\pi_\om(\ga)'J_\Om\cap\{U_\om(G)\}'\\
=&J_\Om\big(\pi_\om(\ga)'\cap\{U_\om(G)\}'\big)J_\Om
=\pi_\om(\ga)'\cap\{U_\om(G)\}'\,,
\end{align*}
which leads to the assertion.
\end{proof}
The following results (cf. Theorem 4.3.20 of \cite{BR}) are useful in the sequel. 
\begin{thm}
\label{br1}
Let $\om\in\ce_{G}(\ga)$ such that $s(\om)\in Z(\ga^{**})$. Then the following assertions are equivalent.
\begin{itemize}
\item[(i)] $\pi_{\om}(\ga)''\cap\{U_\om(G)\}'=\bc I$,
\item[(ii)] $E_{\om}$ has rank one,
\item[(iii)] $\om\in\partial\ce_{G}(\ga)$,
\item[(iv)] $\pi_{\om}(\ga)'\cap \{U_\om(G)\}'=\bc I$.
\end{itemize}
\end{thm}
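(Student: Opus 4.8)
The plan is to prove the four equivalences by taking (iv) as the pivot, establishing (iii)$\Leftrightarrow$(iv), (i)$\Leftrightarrow$(iv) and (ii)$\Leftrightarrow$(iv) separately. Throughout write $M:=\pi_\om(\ga)''$, so that $M'=\pi_\om(\ga)'$ and $\gz_\om=M\cap M'$. The standing hypothesis $s(\om)\in Z(\ga^{**})$ means precisely that $\Om$ is separating for $M$; since $\Om$ is cyclic for $M$ by construction of the GNS representation, it is then cyclic and separating for both $M$ and $M'$, so the Tomita modular data $J_\Om,\D_\Om$ are available and \eqref{jei} is at our disposal. Recall also that $U_\om(g)\Om=\Om$, that $U_\om(g)$ normalises both $M$ and $M'$ since it implements $\a_g$ on $M$, and, as used in the preceding Proposition, that $J_\Om$ commutes with each $U_\om(g)$.

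For (iii)$\Leftrightarrow$(iv) I would reproduce the classical Radon--Nikodym correspondence: a state $\psi$ with $0\le\psi\le\l\om$ has the form $\psi(\,\cdot\,)=\langle\pi_\om(\cdot)T\Om,\Om\rangle$ for a unique $T\in M'$ with $0\le T\le\l I$, and the invariance $\psi\circ\a_g=\psi$ translates, after using $U_\om(g)\Om=\Om$ and that $\Om$ is separating for $M'$, into $T\in\{U_\om(G)\}'$. Hence $\om$ is extremal in $\ce_G(\ga)$ exactly when the only such $T$ are scalars, i.e. $M'\cap\{U_\om(G)\}'=\bc I$; this is a standard fact for which I would refer to \cite{BR}. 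The equivalence (i)$\Leftrightarrow$(iv) is then immediate from modular theory, exactly as in the Proposition above: since $J_\Om M J_\Om=M'$ by \eqref{jei} and $J_\Om$ commutes with $U_\om(G)$, conjugation by $J_\Om$ carries $M\cap\{U_\om(G)\}'$ onto $M'\cap\{U_\om(G)\}'$ and fixes $\bc I$, so one intersection is trivial precisely when the other is.

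The substantive point is (ii)$\Leftrightarrow$(iv), for which I would prove the identity
\[
E_\om\ch_\om=\overline{\big(M'\cap\{U_\om(G)\}'\big)\Om}\,.
\]
The inclusion $\supseteq$ is clear, since $T\Om$ is invariant whenever $T\in\{U_\om(G)\}'$ and $E_\om\ch_\om$ is closed. For $\subseteq$, take an invariant $\xi$ and, using cyclicity of $\Om$ for $M'$, vectors $T_k\in M'$ with $T_k\Om\to\xi$; then $E_\om\xi=\xi$ gives $E_\om T_k\Om\to\xi$. By the mean ergodic theorem $E_\om$ lies in the weak-operator-closed convex hull of $\{U_\om(g)\}$, so $E_\om T_k\Om$ is approximated by averages $\frac1N\sum_n U_\om^n T_k U_\om^{-n}\Om$ (writing the argument for $G=\bz$ and invoking $U_\om^{-n}\Om=\Om$); each averaged operator lies in $M'$ because $U_\om$ normalises $M'$, the sequence is norm bounded, and any weak-operator limit point is $U_\om$-invariant by asymptotic invariance of the averages, hence lies in $M'\cap\{U_\om(G)\}'$ and sends $\Om$ to $E_\om T_k\Om$. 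This places $E_\om T_k\Om$, and in the limit $\xi$, in the right-hand side. Granting the identity, and recalling that $T\mapsto T\Om$ is injective on $M'$ because $\Om$ separates $M'$, we obtain that $E_\om$ has rank one (that is $E_\om\ch_\om=\bc\Om$) if and only if $M'\cap\{U_\om(G)\}'=\bc I$, which is (ii)$\Leftrightarrow$(iv). The symmetric identity $E_\om\ch_\om=\overline{(M\cap\{U_\om(G)\}')\Om}$, proved in the same way from cyclicity of $\Om$ for $M$, yields (i)$\Leftrightarrow$(ii) directly and double-checks the chain.

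I expect the main obstacle to be precisely the mean-ergodic step in the displayed identity: from a mere invariant vector one must manufacture a genuine element of the fixed-point algebra $M'\cap\{U_\om(G)\}'$, which requires controlling the convergence of the operator averages in the weak operator topology while simultaneously tracking their action on $\Om$. For a general group $G$ the Cesàro averages should be replaced by the Alaoglu--Birkhoff mean ergodic projection inside the weak-operator-closed convex hull of $\{U_\om(g)\}$, and some care is needed to arrange asymptotic invariance so that the limit point really commutes with $U_\om(G)$. The separating property of $\Om$, i.e. the central support hypothesis, is what makes the injectivity and cyclicity inputs available, and it is used decisively at every turn.
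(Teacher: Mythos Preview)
The paper does not prove this theorem at all: it is stated with the parenthetical ``(cf.\ Theorem 4.3.20 of \cite{BR})'' and no proof is supplied, so there is nothing in the paper to compare your argument against. Your proposal is a correct reconstruction of the standard proof one finds in \cite{BR}, with the pleasant shortcut that (i)$\Leftrightarrow$(iv) is obtained directly from the modular conjugation identity \eqref{jei} and the commutation $J_\Om U_\om(g)=U_\om(g)J_\Om$---exactly the mechanism used in the Proposition immediately preceding the theorem in the paper. The Radon--Nikodym argument for (iii)$\Leftrightarrow$(iv) is the textbook one, and your mean-ergodic identification $E_\om\ch_\om=\overline{(M'\cap\{U_\om(G)\}')\Om}$ is also standard; for (ii)$\Rightarrow$(iv) you do not even need the full identity, since any $T\in M'\cap\{U_\om(G)\}'$ already satisfies $T\Om\in E_\om\ch_\om=\bc\Om$, and the separating property of $\Om$ for $M'$ (from cyclicity for $M$) forces $T\in\bc I$.

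The only genuine caveat is the one you flag yourself: for a general group $G$ the Ces\`aro averages must be replaced by an appropriate averaging scheme (F{\o}lner means for amenable $G$, or the Alaoglu--Birkhoff/Kovacs--Sz\"ucs projection in the weak-operator-closed convex hull of $\{U_\om(g)\}$), and one must check that the limit point both lies in $M'$ and commutes with every $U_\om(g)$. Since the paper is ultimately concerned with $G=\bz$ and simply defers to \cite{BR} for the general statement, your $\bz$-specific argument is entirely adequate for the present purposes.
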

A $C^*$--dynamical system $(\ga,\a,\om)$ is said to be $G$--{\it Abelian} if 
$$
E_{\om}\pi_{\om}(\ga)E_{\om}\subset\cb(\ch_\om)
$$ 
is a family of mutually commuting operators.
We end the present section by reporting Proposition 4.3.7 of \cite{BR}.
\begin{thm}
\label{br2}
Let $\om\in\ce_{G}(\ga)$, such that $s(\om)\in Z(\ga^{**})$. Then the following assertions are equivalent.
\begin{itemize}
\item[(i)] $(\ga,\a,\om)$ is $G$--Abelian,
\item[(ii)] $\pi_{\om}(\ga)'\cap \{U_\om(G)\}'$ is an Abelian von Neumann algebra,
\item[(iii)] there exists a unique maximal measure $\n\in M_\om(\ce_{G}(\ga))$.
\end{itemize}
\end{thm}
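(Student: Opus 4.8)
The plan is to route all three equivalences through the commutant $\cn:=\pi_\om(\ga)'\cap\{U_\om(G)\}'$ and the projection $E_\om$ onto the invariant vectors: I would prove (i)$\Leftrightarrow$(ii) by an operator--algebraic compression argument on $E_\om\ch_\om$, and (ii)$\Leftrightarrow$(iii) through Tomita's correspondence between orthogonal measures and abelian subalgebras together with Choquet simplex theory. Throughout one uses that $s(\om)\in Z(\ga^{**})$ makes $\Om$ cyclic and separating for $\pi_\om(\ga)''$, hence cyclic for $\pi_\om(\ga)'$ and separating for every von Neumann subalgebra of $\pi_\om(\ga)'$, in particular for $\cn$.

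For (i)$\Leftrightarrow$(ii) I first record that $\cn$ preserves $E_\om\ch_\om$ (if $T\in\cn$ then $T\Om$ is invariant, since $T$ commutes with each $U_\om(g)$ and $U_\om(g)\Om=\Om$) and that in fact $\overline{\cn\Om}=E_\om\ch_\om$. The latter is the first real step: given an invariant $\eta\perp\cn\Om$, the ergodic means $\frac1N\sum_n U_\om(g_n)T'U_\om(g_n)^{*}$ of an arbitrary $T'\in\pi_\om(\ga)'$ admit a weak--operator limit point $\overline{T'}\in\cn$, and invariance of $\Om$ and of $\eta$ gives $\langle T'\Om,\eta\rangle=\langle\overline{T'}\Om,\eta\rangle=0$, so $\eta\perp\overline{\pi_\om(\ga)'\Om}=\ch_\om$ and $\eta=0$. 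Next I set
\[
\mathcal R:=\{E_\om\pi_\om(A)E_\om:A\in\ga\}''\subset\cb(E_\om\ch_\om),\qquad \cn_{E_\om}:=\{TE_\om:T\in\cn\},
\]
the latter a von Neumann algebra on $E_\om\ch_\om$ isomorphic to $\cn$ (injectivity since $\Om$ is separating). One inclusion, $\cn_{E_\om}\subset\mathcal R'$, is immediate, as every $T\in\cn$ commutes with each $\pi_\om(A)$ and with $E_\om$. The crux is the reverse $\mathcal R'\subset\cn_{E_\om}$: for $S\in\mathcal R'$ the estimate
\[
\|\pi_\om(A)S\Om\|^2=\langle (E_\om\pi_\om(A^*A)E_\om)S\Om,S\Om\rangle\le\|S\|^2\|\pi_\om(A)\Om\|^2
\]
(valid because $S$ commutes with the positive element $E_\om\pi_\om(A^*A)E_\om\in\mathcal R$ and with its square root) shows that $S\Om$ is a bounded vector for $\pi_\om(\ga)'$; hence there is $T\in\pi_\om(\ga)'$ with $T\Om=S\Om$, and invariance of $S\Om$ forces $T\in\cn$ and $S=TE_\om$. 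Thus $\mathcal R'=\cn_{E_\om}$, so $\Om$ is cyclic and separating for both $\mathcal R$ and $\cn_{E_\om}=\mathcal R'$. Since an abelian von Neumann algebra with a cyclic vector is maximal abelian, either of $\mathcal R,\cn_{E_\om}$ is abelian iff it coincides with its commutant iff the other is abelian; as (i) is exactly the commutativity of $\mathcal R$ and (ii) that of $\cn\cong\cn_{E_\om}$, this yields (i)$\Leftrightarrow$(ii).

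For (ii)$\Leftrightarrow$(iii) I would invoke Tomita's theorem: orthogonal measures with barycentre $\om$ are in order--preserving bijection with abelian von Neumann subalgebras of $\pi_\om(\ga)'$, the $G$--invariant ones corresponding to abelian subalgebras of $\cn$, and the Choquet--maximal orthogonal measures to the maximal abelian subalgebras of $\cn$. A von Neumann algebra has a unique maximal abelian subalgebra precisely when it is itself abelian (otherwise two non--commuting selfadjoint elements lie in distinct maximal abelian subalgebras). Combined with the fact that $G$--abelianness is equivalent to $\ce_G(\ga)$ being a Choquet simplex --- in which the maximal representing measure is unique --- one concludes: if $\cn$ is abelian the canonical orthogonal measure attached to $\cn$ is the unique maximal element of $M_\om(\ce_G(\ga))$, whereas a non--abelian $\cn$ produces several distinct maximal abelian subalgebras and hence several distinct maximal measures.

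The main obstacle is the commutant identity $\mathcal R'=\cn_{E_\om}$, and inside it the boundedness estimate displayed above: this is exactly where the standing hypothesis $s(\om)\in Z(\ga^{**})$ (equivalently, $\Om$ separating for $\pi_\om(\ga)''$) is indispensable, since it both supplies the cyclicity of $\Om$ for $\pi_\om(\ga)'$ used to promote $S\Om$ to an element of $\cn$ and guarantees that the compression $T\mapsto TE_\om$ is faithful. On the measure side, the delicate point is that uniqueness of the Choquet--maximal measure over \emph{all} of $M_\om(\ce_G(\ga))$, not merely among orthogonal measures, must be matched with uniqueness of the maximal abelian subalgebra; I would secure this through the simplex characterisation rather than by a direct comparison of measures.
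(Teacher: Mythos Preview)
The paper does not prove this theorem at all; it merely quotes it as Proposition~4.3.7 of Bratteli--Robinson~\cite{BR} (``We end the present section by reporting Proposition 4.3.7 of \cite{BR}''). So there is no in--paper proof to compare against; your proposal is to be measured against the Bratteli--Robinson argument itself.

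Your treatment of (i)$\Leftrightarrow$(ii) is essentially the standard proof (BR, Lemma~4.3.3): the compression identity $\mathcal R'=\cn_{E_\om}$ on $E_\om\ch_\om$, obtained via the bounded--vector estimate, and the maximal--abelian--with--cyclic--vector trick. This part is correct. One small remark: the step $\overline{\cn\Om}=E_\om\ch_\om$ is where you actually use $s(\om)\in Z(\ga^{**})$ (through $\overline{\pi_\om(\ga)'\Om}=\ch_\om$); the remaining steps need only GNS cyclicity of $\Om$, which already gives that $\Om$ is separating for $\pi_\om(\ga)'$ and hence for $\cn$.

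There is, however, a genuine gap in your (ii)$\Leftrightarrow$(iii). The ``simplex characterisation'' you invoke (BR, Theorem~4.3.11) says that $\ce_G(\ga)$ is a Choquet simplex provided the system is $G$--abelian \emph{at every invariant state}, not merely at the single state $\om$ under consideration. From $\cn=\pi_\om(\ga)'\cap\{U_\om(G)\}'$ abelian you only get $G$--abelianness at $\om$; this does not let you conclude that $\ce_G(\ga)$ is a simplex, and hence not that the maximal measure in $M_\om(\ce_G(\ga))$ is unique. Likewise, in the non--abelian direction you assert that distinct maximal abelian subalgebras of $\cn$ yield distinct \emph{Choquet--maximal} measures in $M_\om(\ce_G(\ga))$, but maximality among orthogonal measures is not a priori maximality in all of $M_\om(\ce_G(\ga))$; this is exactly the ``direct comparison of measures'' you say you want to avoid. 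Bratteli--Robinson's actual route is precisely that direct comparison: one shows (BR, Proposition~4.3.2 and Lemma~4.3.4) via the Cartier--Fell--Meyer description of the Choquet order that the orthogonal measure attached to any maximal abelian subalgebra of $\cn$ dominates every $\mu\in M_\om(\ce_G(\ga))$ it is comparable with and is maximal there, and that when $\cn$ itself is abelian the orthogonal measure $\mu_\cn$ dominates every $\mu\in M_\om(\ce_G(\ga))$. That is the missing lemma; once you have it, (ii)$\Leftrightarrow$(iii) follows without any appeal to the global simplex property.
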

Notice that $\n$ is precisely the {\it orthogonal measure} uniquely associated to the Abelian von Neumann algebra $\pi_{\om}(\ga)'\cap \{U_\om(G)\}'$ (cf. Theorem 4.1.25). Such a measure is pseudo--supported on the ergodic states 
$\partial\ce_{G}(\ga)$ (cf. \cite{BR}, Proposition 4.3.2). In addition, if $\ga$ is separable, each Borel set is a Baire set, then such a measure $\n$ is supported on the ergodic states 
$\partial\ce_{G}(\ga)$. The reader is referred to Section 4.3 of \cite{BR} for details and proofs.

We now report some crucial results contained in \cite{F3, NSZ} concerning discrete 
$C^*$--dynamical systems. For $v\in\bt$ denote 
$$
M_v=\{A\in M: \a(A)=vA\}\,,\quad M'_v=\{B\in M': \a(B)=vB\}\,.
$$
\begin{prop}
\label{gnsc1}
Let $\big(\ga,\a,\om\big)$ be the $C^*$--dynamical system such that 
$s(\om)\in Z(\ga^{**})$. Then with the previous notations, 
$$
\overline{M_v\Om}=\overline{M'_v\Om}=E^{U_\om}_{v}\ch\,.
$$
In addition, $\s_{\mathop{\rm pp}}(U_\om)=\s_{\mathop{\rm pp}}(U_\om)^{-1}$ and
$J_\Om E^{U_\om}_v=E^{U_\om}_{\bar v}J_\Om$. Suppose further that $\om$ is ergodic, then 
$$
E^{U_\om}_{v}\ch=\bc V_v\Om=\bc W_v\Om\,,\quad v\in\s_{\mathop{\rm pp}}(U_\om)\,,
$$
for some a unique
unitary $V_{v}\in M$ ($W_{v}\in M'$) up to a phase. In addition, $\s_{\mathop{\rm pp}}(U_\om)$ is 
a subgroup of $\bt$.
\end{prop}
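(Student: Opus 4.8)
The plan is to treat the three assertions in turn, throughout writing $\a=\ad{}\!_{U_\om}$ for the automorphism implemented on both $M$ and $M'$ by the canonical unitary, and using $U_\om\Om=\Om$. The inclusions ``$\subseteq$'' in the first identity are immediate: if $A\in M_v$ then $U_\om A\Om=\a(A)\Om=vA\Om$, so $A\Om\in E^{U_\om}_v\ch$, whence $\overline{M_v\Om}\subseteq E^{U_\om}_v\ch$ because the eigenspace is closed, and the same computation in $M'$ gives $\overline{M'_v\Om}\subseteq E^{U_\om}_v\ch$. The substance of the statement is the reverse inclusion, and this is where I expect the main difficulty to lie.

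To obtain $E^{U_\om}_v\ch\subseteq\overline{M_v\Om}$ I would build an ``ergodic Fourier projection'' $\Th_v:M\to M$ by $\Th_v(A):=\mathop{\rm s\!-\!lim}_N\frac1N\sum_{n=0}^{N-1}\bar v^n\a^n(A)$. The key point is that these Ces\`aro averages converge in the strong operator topology. Since they are uniformly bounded by $\|A\|$, it suffices to check convergence on the dense set $M'\Om$ (dense because $\Om$ is separating for $M$, hence cyclic for $M'$): for $B\in M'$ the commutation $[\,\a^n(A),B\,]=0$ together with $U_\om^{-n}\Om=\Om$ gives $\a^n(A)B\Om=B\,\a^n(A)\Om=BU_\om^nA\Om$, so the average applied to $B\Om$ equals $B\big(\frac1N\sum_{n=0}^{N-1}\bar v^nU_\om^n\big)A\Om$, which converges to $BE^{U_\om}_vA\Om$ by the von Neumann mean ergodic theorem applied to the unitary $\bar vU_\om$. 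Thus $\Th_v(A)$ exists; it lies in $M$ because $M$ is strongly closed and each average is in $M$, and a shift of the summation index shows $\a(\Th_v(A))=v\,\Th_v(A)$, so $\Th_v(A)\in M_v$. Taking $B=\idd$ above gives $\Th_v(A)\Om=E^{U_\om}_vA\Om$. Now for $\xi\in E^{U_\om}_v\ch$, cyclicity of $\Om$ for $M$ provides $C_k\in M$ with $C_k\Om\to\xi$, and then $\Th_v(C_k)\in M_v$ with $\Th_v(C_k)\Om=E^{U_\om}_vC_k\Om\to E^{U_\om}_v\xi=\xi$, proving $\xi\in\overline{M_v\Om}$. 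The identical argument carried out in $M'$ (again using that $\Om$ is cyclic for $M$) yields $E^{U_\om}_v\ch=\overline{M'_v\Om}$.

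For the modular statement I would use that $J_\Om$ commutes with $U_\om$ (\cite{NSZ}, Proposition 3.3, as already invoked above). Given $U_\om\xi=v\xi$, antilinearity of $J_\Om$ gives $U_\om J_\Om\xi=J_\Om U_\om\xi=\bar v\,J_\Om\xi$, so $J_\Om E^{U_\om}_v\ch\subseteq E^{U_\om}_{\bar v}\ch$; since $J_\Om^2=\idd$ this is an equality and $J_\Om E^{U_\om}_v=E^{U_\om}_{\bar v}J_\Om$. In particular $E^{U_\om}_v\ne0$ iff $E^{U_\om}_{\bar v}\ne0$, which is exactly $\s_{\mathop{\rm pp}}(U_\om)=\s_{\mathop{\rm pp}}(U_\om)^{-1}$.

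Finally, for the ergodic case I would first show each eigenspace attached to $v\in\s_{\mathop{\rm pp}}(U_\om)$ is spanned by a unitary. As $E^{U_\om}_v\ne0$, the first part furnishes a nonzero $A\in M_v$; and for $A,B\in M_v$, since $|v|=1$, both $A^*B$ and $AA^*$ lie in $M_1=\pi_\om(\ga)''\cap\{U_\om\}'$, which equals $\bc\idd$ by Theorem \ref{br1}(i). Hence $A^*A,AA^*\in\bc\idd$, so a nonzero $A\in M_v$ is a scalar multiple of a unitary $V_v\in M$, and $V_v^*B\in\bc\idd$ forces $M_v=\bc V_v$, i.e. $V_v$ is unique up to a phase; therefore $E^{U_\om}_v\ch=\overline{M_v\Om}=\bc V_v\Om$, with $V_v\Om\ne0$ because $\Om$ is separating. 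The same reasoning in $M'$, using Theorem \ref{br1}(iv), produces $W_v\in M'$. That $\s_{\mathop{\rm pp}}(U_\om)$ is a subgroup then follows formally: $1\in\s_{\mathop{\rm pp}}(U_\om)$ by ergodicity ($E^{U_\om}_1\ch=\bc\Om$), inverse--closedness is the previous paragraph, and if $v,w\in\s_{\mathop{\rm pp}}(U_\om)$ then $V_vV_w$ is a unitary in $M$ with $\a(V_vV_w)=vw\,V_vV_w$, so $V_vV_w\in M_{vw}$ is nonzero and $vw\in\s_{\mathop{\rm pp}}(U_\om)$. The main obstacle throughout is the strong convergence of the averages defining $\Th_v$ together with the verification that the limit stays inside $M_v$; once that ergodic projection is in hand, the remaining assertions are essentially bookkeeping with the modular conjugation and Theorem \ref{br1}.
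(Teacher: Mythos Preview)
Your proof is correct. The paper does not actually give its own proof of this proposition; it is stated there as a compilation of ``crucial results contained in \cite{F3, NSZ}'' and is quoted without argument. Your approach---constructing the ergodic Fourier projection $\Th_v(A)=\mathop{\rm s\!-\!lim}_N\frac1N\sum_{n=0}^{N-1}\bar v^{\,n}\a^n(A)$ by checking strong convergence on the dense set $M'\Om$ via commutation and the mean ergodic theorem, then reading off $\Th_v(A)\Om=E^{U_\om}_vA\Om$---is precisely the standard route used in those references, so there is nothing substantively different to compare.

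One small remark on presentation: when you pass to the $M'$ case you write ``again using that $\Om$ is cyclic for $M$''. What you actually need there is both that $\Om$ is cyclic for $M'$ (to approximate $\xi\in E^{U_\om}_v\ch$ by $M'\Om$) and that $\Om$ is cyclic for $M$ (so that $M\Om$ is dense, giving the test set for strong convergence of the averages of $B\in M'$). Both hold because $s(\om)\in Z(\ga^{**})$ makes $\Om$ cyclic and separating for $M$, hence also for $M'$; you clearly know this, but the sentence as written is slightly ambiguous about which cyclicity is doing which job.
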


\subsection{Nonconventional Ergodic Theorem} 

Let $\big(\gam,\b,\f\big)$ be a $C^*$--dynamical system, together with another state
$\r\in\cs(\ga)$. Notice that $\r$ is supposed in general to be neither invariant 
under the action of $\b$, nor normal w.r.t. $\f$. Let $\gn\subset\gam$ be a 
$*$--subalgebra. Denote $\gn^{\b}:=\gn\cap\gam^{\b}$ the set of the elements of $\gn$ invariant under $\b$. 
\begin{defin}
\label{fu}
The state $\r$ is said to be generic (cf. \cite{F3, Fu})
for $\big(\gam,\b,\f\big)$ 
if there exists a $*$--subalgebra $\gn\subset\gam$ such that
\begin{itemize}
\item[(i)] $\overline{\pi_{\f}(\gn^{\b})\F}=E^{U_\f}_{1}\ch_{\f}$,
\item[(ii)] for each $B\in\gn$,
$$
\lim_{N\to+\infty}\frac{1}{N}\sum_{n=0}^{N-1}\r(\b^{n}(B))=\f(B)\,.
$$
\end{itemize}
\end{defin}
Let the state $\r$ be generic w.r.t. a subalgebra $\gn$. It was shown in \cite{F3} (see \cite{Fu} for the classical case) that 
\begin{equation}
\label{fu4}
\pi_{\f}(B)\F\in\pi_{\f}(\gn^{\b})\F\subset\ch_\f\mapsto \pi_{\r}(B)\rm{P}\in\ch_{\r}
\end{equation}
uniquely defines a partial isometry $V:\ch_{\f}\mapsto\ch_{\r}$ such 
that $V^{*}V=E^{U_\f}_{1}\ch_{\f}$. 
We report Theorem 3.5 of \cite{F3}, which is the extension to the non commutative case of Furstenberg Nonconventional Ergodic Theorem (cf. \cite{Fu, Fu1}).
\begin{thm}
\label{fu1}
Let $\r$ be generic for $\big(\gam,\b,\f\big)$ 
w.r.t. a $*$--subalgebra $\gn$.
Then for each $B\in\gn$,
$$
\lim_{N\to+\infty}\frac{1}{N}\sum_{n=0}^{N-1}\pi_{\r}(\a^{n}(B))\rm{P}
=V\left(\pi_{\f}(B)\F\right)\,,
$$
$V:\ch_{\f}\mapsto\ch_{\r}$ being the partial isometry uniquely defined by 
\eqref{fu4}.
\end{thm}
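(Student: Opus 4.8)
The plan is to reduce the convergence of the Cesàro means on the side of $\r$ to the von Neumann Mean Ergodic Theorem on the side of $\f$, transported across the partial isometry $V$ of \eqref{fu4}. Write $W_N(B):=\frac1N\sum_{n=0}^{N-1}\pi_{\r}(\b^{n}(B))\R$. Since $\|\pi_{\r}(\b^{n}(B))\R\|^{2}=\r(\b^{n}(B^{*}B))\le\|B\|^{2}$, the sequence $\big(W_N(B)\big)_N$ is bounded by $\|B\|$ uniformly in $N$. First I would dispose of the invariant elements: if $B\in\gn^{\b}$ then $\b^{n}(B)=B$ for all $n$, so $W_N(B)=\pi_{\r}(B)\R=V(\pi_{\f}(B)\F)$ already for every $N$, which is the assertion in that case. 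For general $B$, note that on $\ch_{\f}$ the unitary $U_{\f}$ implements $\b$ and fixes $\F$, whence $\frac1N\sum_{n=0}^{N-1}\pi_{\f}(\b^{n}(B))\F=\frac1N\sum_{n=0}^{N-1}U_{\f}^{n}\pi_{\f}(B)\F\to E^{U_\f}_{1}\pi_{\f}(B)\F$ by the Mean Ergodic Theorem. By genericity (i) this limit lies in $E^{U_\f}_{1}\ch_{\f}=\overline{\pi_{\f}(\gn^{\b})\F}$, so there exist $C_{k}\in\gn^{\b}$ with $\pi_{\f}(C_{k})\F\to E^{U_\f}_{1}\pi_{\f}(B)\F$; since $V$ is isometric on $E^{U_\f}_{1}\ch_{\f}$, this gives $\pi_{\r}(C_{k})\R=V(\pi_{\f}(C_{k})\F)\to V(\pi_{\f}(B)\F)$.

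The theorem then follows once I establish the key estimate that for every $D\in\gn$ one has $\limsup_{N}\|W_N(D)\|^{2}\le\|E^{U_\f}_{1}\pi_{\f}(D)\F\|^{2}$. Indeed, applying it to $D_{k}:=B-C_{k}$ and using the identity $W_N(B)-\pi_{\r}(C_{k})\R=W_N(D_{k})$ together with $E^{U_\f}_{1}\pi_{\f}(D_{k})\F=E^{U_\f}_{1}\pi_{\f}(B)\F-\pi_{\f}(C_{k})\F\to 0$, I obtain $\limsup_{N}\|W_N(B)-\pi_{\r}(C_{k})\R\|\to 0$ as $k\to\infty$. Combined with $\pi_{\r}(C_{k})\R\to V(\pi_{\f}(B)\F)$ from the previous paragraph, the triangle inequality yields $\lim_{N}W_N(B)=V(\pi_{\f}(B)\F)$, as required. (The same reduction shows that it even suffices to prove the key estimate for those $D$ whose invariant component vanishes, i.e.\ $E^{U_\f}_{1}\pi_{\f}(D)\F=0$, where it asserts $W_N(D)\to 0$.)

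The heart of the matter is thus the key estimate, which amounts to transferring a double Cesàro average from $\r$ to $\f$. Setting $v_{n}:=\pi_{\r}(\b^{n}(D))\R$ and $\psi:=\pi_{\f}(D)\F$, genericity (ii) applied to the \emph{single} element $D^{*}\b^{h}(D)\in\gn$ gives, for each fixed lag $h$, the limiting correlation
$$
\lim_{N}\frac1N\sum_{n=0}^{N-1}\langle v_{n+h},v_{n}\rangle
=\lim_{N}\frac1N\sum_{n=0}^{N-1}\r\big(\b^{n}(D^{*}\b^{h}(D))\big)
=\f\big(D^{*}\b^{h}(D)\big)=\langle U_{\f}^{h}\psi,\psi\rangle\,.
$$
I would then feed these correlations into the van der Corput mean inequality, which reduces the vanishing of the Cesàro means of the bounded sequence $(v_{n})$ to the vanishing, in $h$, of the Cesàro means of $|\langle U_{\f}^{h}\psi,\psi\rangle|$. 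Decomposing $\psi$ into its $U_{\f}$--weakly mixing part (continuous spectral measure) and its almost periodic part, the weakly mixing part has $\frac1H\sum_{h=0}^{H-1}|\langle U_{\f}^{h}\psi,\psi\rangle|\to 0$ and van der Corput forces the corresponding averages to $0$, matching $E^{U_\f}_{1}=0$ there; the almost periodic part is spanned by eigenvectors $E^{U_\f}_{v}\ch_{\f}$, on which the scalar averages $\frac1N\sum_{n}v^{n}$ localize at the eigenvalue $v=1$, handled by genericity (i) and the eigenoperators of Proposition \ref{gnsc1}.

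The step I expect to be the main obstacle is precisely this last reconciliation of the two spectral parts on the side of $\r$. Van der Corput is \emph{inconclusive} on the pure point part at eigenvalues $v\neq1$, where the naive lag-correlations $|\langle U_{\f}^{h}\psi,\psi\rangle|$ do not average to zero, so one cannot conclude $W_N\to 0$ from it alone; the cancellation there is a large-scale effect coming from the accumulation over large lags, and making it rigorous requires controlling $\|W_N(D)\|$ uniformly in $N$ rather than lag by lag. My plan is to route those components through the eigenoperators $V_{v}\in M$, $W_{v}\in M'$ of Proposition \ref{gnsc1}, which satisfy $\b(V_{v})=vV_{v}$ \emph{exactly} and hence give $W_N(V_{v})=\big(\tfrac1N\sum_{n}v^{n}\big)\pi_{\r}(V_{v})\R\to 0$ for $v\neq1$, thereby replacing the uncontrolled pure point components by genuine eigenoperators and, for $v=1$, by elements of $\gn^{\b}$ via condition (i). Isolating this eigenoperator representation of the almost periodic part, and dominating the residual genuinely weakly mixing contribution by the van der Corput estimate above, is the delicate core of the argument.
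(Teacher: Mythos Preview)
The paper does not actually prove Theorem \ref{fu1}; it is quoted from \cite{F3} (where it is Theorem 3.5), so there is no in-paper proof to compare against. That said, your outline coincides with the Furstenberg/\cite{F3} argument up to and including the reduction to the ``key estimate'' $\limsup_{N}\|W_N(D)\|^{2}\le\|E^{U_\f}_{1}\pi_{\f}(D)\F\|^{2}$; the divergence is in how you try to prove that estimate.

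Your obstacle is self-inflicted. By invoking the van der Corput inequality in its \emph{absolute-value} form you are left with $\frac{1}{H}\sum_{h}|\langle U_{\f}^{h}\psi,\psi\rangle|$, which indeed does not vanish on the nontrivial pure point spectrum. But the standard proof never takes absolute values. One writes, for fixed $H$ and $N$ large,
\[
\Big\|\frac{1}{N}\sum_{n=0}^{N-1}v_{n}\Big\|^{2}
\le\Big\|\frac{1}{N}\sum_{n=0}^{N-1}\frac{1}{H}\sum_{k=0}^{H-1}v_{n+k}\Big\|^{2}+o_{N}(1)
\le\frac{1}{N}\sum_{n=0}^{N-1}\Big\|\frac{1}{H}\sum_{k=0}^{H-1}v_{n+k}\Big\|^{2}+o_{N}(1)\,,
\]
expands the inner square as $\frac{1}{H^{2}}\sum_{k,l=0}^{H-1}\langle v_{n+k},v_{n+l}\rangle$, and then applies genericity (ii) to each of the finitely many elements $D^{*}\b^{k-l}(D)\in\gn$ to obtain
\[
\limsup_{N}\|W_N(D)\|^{2}\le\frac{1}{H^{2}}\sum_{k,l=0}^{H-1}\langle U_{\f}^{k}\psi,U_{\f}^{l}\psi\rangle
=\Big\|\frac{1}{H}\sum_{k=0}^{H-1}U_{\f}^{k}\psi\Big\|^{2}\,.
\]
Letting $H\to\infty$ and using the Mean Ergodic Theorem on the $\f$ side gives exactly $\|E^{U_\f}_{1}\psi\|^{2}$, with no spectral splitting whatsoever: the oscillation from eigenvalues $v\neq 1$ is killed by the \emph{signed} $H$-average, not by a separate argument.

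Your proposed repair via the eigenoperators $V_{v}\in M$, $W_{v}\in M'$ of Proposition \ref{gnsc1} is not available here: that proposition concerns the concrete system $(\ga,\a,\om)$ with $s(\om)\in Z(\ga^{**})$ and, for the one-dimensionality of eigenspaces, \emph{ergodicity}. Theorem \ref{fu1} is stated for an arbitrary $(\gam,\b,\f)$ with a generic state $\r$ and an abstract $*$-subalgebra $\gn$; there is no reason for $\gn$ to contain eigenoperators for every $v\in\s_{\mathop{\rm pp}}(U_{\f})$, nor for those eigenspaces to be one-dimensional. So that route cannot close the gap in general.
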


\section{on the reduction theory}
\label{sec:red}

We recall some general facts concerning the direct integral decomposition of representations and states. We will mainly concern with the separable case if it is not otherwise specified. 

Let $\ga$ be a unital not necessarily separable $C^*$--algebra which we always suppose to have an unity $\idd$. Consider a state 
$\f\in\cs(\a)$. It is then possible to decompose the state
\begin{equation}
\label{dcpz}
\f=\int_{\cs(\ga)}\psi\di\n(\psi)\,,
\end{equation}
where $\n$ is a Radon probability measure on the convex $*$--weakly compact set which is precisely the 
{\it orthogonal measure} (cf. \cite{BR}, Section 4.1.3) associated to $\gz$. In addition,
$\gz\sim L^\infty(\cs(\ga),\m)$. Suppose now that $\f$ is invariant w.r.t. the action $\a$ of a group $G$. In addition, 
if $\gz=\pi_\om(\ga)'\cap\{U_\om(G)\}'$, then $\n$ is supported on the invariant states 
$\ce_G(\ga)$, and pseudo--supported on the erogic states 
$\partial\ce_G(\ga)$. It is also the unique maximal measure in $M_\om(\ce_{G}(\ga))$. We refer the reader to Section 4 of \cite{BR}, and Section 3 of \cite{S}.

We now pass to the case of interest for us, that is when $\ga$ is separable and $G=\bz$, acting on $\ga$ by powers of a single automorphism $\a$. Let $(\ch,\pi)$ be a representation of $\ga$ on a separable Hilbert space, together with an Abelian von Neumann sub algebra $\gz\subset\pi(\ga)'$. Consider the decomposition $\pi=\int^\oplus_Z\pi_z\di\m(z)$ w.r.t. $\gz$ (i.e. such that $\gz$ is the algebra of diagonalisable operators: $\gz\sim L^\infty(Z,\m)$), which always exists according Theorem 8.3.2 of \cite{DX0}. 
\begin{lem}
\label{dixx}
With the above notations, we have
$\pi(\ga)''=\int^{\oplus}_{Z}\pi_{z}(\ga)''\di\m(z)$ if and only if $\gz\subset\pi(\ga)''$.
\end{lem}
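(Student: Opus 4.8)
The plan is to prove the equivalence by exploiting the fact that the decomposition $\pi=\int^\oplus_Z\pi_z\,\di\m(z)$ is precisely the one that diagonalises $\gz$, so that $\gz$ consists of the diagonalisable operators and is therefore contained in the algebra $\int^\oplus_Z\cb(\ch_z)\,\di\m(z)$ of decomposable operators. The two directions are quite different in character, so I would treat them separately.

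For the direction ($\Leftarrow$), suppose $\gz\subset\pi(\ga)''$. The general theory of direct integrals (Theorem 8.3.2 of \cite{DX0} and the surrounding material) tells us that $\pi(\ga)''$, being generated by the decomposable operators $\pi(A)=\int^\oplus_Z\pi_z(A)\,\di\m(z)$ for $A\in\ga$, is contained in the algebra of decomposable operators and admits a direct integral description. The natural candidate is $\pi(\ga)''\subset\int^\oplus_Z\pi_z(\ga)''\,\di\m(z)$, with the reverse inclusion being the content one must establish; the standard result is that $\int^\oplus_Z\pi_z(\ga)''\,\di\m(z)$ equals the von Neumann algebra generated by $\pi(\ga)$ together with the diagonal algebra $\gz$. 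Thus $\int^\oplus_Z\pi_z(\ga)''\,\di\m(z)=\big(\pi(\ga)\cup\gz\big)''$. The hypothesis $\gz\subset\pi(\ga)''$ immediately collapses the right-hand side to $\pi(\ga)''$, giving the desired equality. I would cite the relevant result from \cite{DX0} (the description of the commutant and bicommutant of a decomposable family, roughly Part II, Chapter 3 of Dixmier) rather than reprove it.

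For the converse ($\Rightarrow$), suppose $\pi(\ga)''=\int^\oplus_Z\pi_z(\ga)''\,\di\m(z)$. Here I would use that the diagonalisable algebra $\gz$ is always contained in $\int^\oplus_Z\pi_z(\ga)''\,\di\m(z)$: indeed the scalar-valued diagonal operators $\int^\oplus_Z f(z)I_{\ch_z}\,\di\m(z)$ lie in the centre of the algebra of decomposable operators, hence in particular in $\int^\oplus_Z\pi_z(\ga)''\,\di\m(z)$ provided the fibres $\pi_z(\ga)''$ contain the scalars $\bc I_{\ch_z}$, which holds since $\ga$ is unital and each $\pi_z$ is a representation. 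Granting this standard fact, the assumed equality $\pi(\ga)''=\int^\oplus_Z\pi_z(\ga)''\,\di\m(z)$ yields $\gz\subset\pi(\ga)''$ directly.

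The main obstacle is the direction ($\Leftarrow$), and specifically the identification $\int^\oplus_Z\pi_z(\ga)''\,\di\m(z)=\big(\pi(\ga)\cup\gz\big)''$. This is the genuine measure-theoretic content: one must verify that taking the fibrewise von Neumann closure commutes, in the appropriate sense, with the direct integral, which requires the separability assumptions in force (separable $\ga$ and separable $\ch$) so that the measurable field of von Neumann algebras $z\mapsto\pi_z(\ga)''$ is well behaved and the generation statement holds $\m$-almost everywhere. I expect this to rest entirely on the structural results in \cite{DX0}, so the proof should consist of invoking that identification and then performing the two short inclusion arguments above; no delicate new estimate is needed beyond the correct citation.
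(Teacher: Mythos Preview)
Your proposal is correct and follows essentially the same route as the paper: the paper declares the direction $(\Rightarrow)$ trivial (exactly your observation that the diagonal algebra consists of the scalar fields $\int^\oplus_Z f(z)I(z)\di\m(z)$ and hence lies in $\int^\oplus_Z\pi_z(\ga)''\di\m(z)$), and for $(\Leftarrow)$ it simply cites Lemma~8.4.1 of \cite{DX0}, which is precisely the structural identification $\int^\oplus_Z\pi_z(\ga)''\di\m(z)=(\pi(\ga)\cup\gz)''$ that you invoke. Your write-up is slightly more explicit about the content of the Dixmier citation, but the argument is the same.
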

\begin{proof}
We can assume that $\ch=\int^{\oplus}_{Z}\ch_z\di\m(z)$. Then
$$
\gz=\left\{\int^{\oplus}_{Z}f(z)I(z)\di\m(z)\mid f\in L^{\infty}(Z,\m)\right\}\,,
$$ 
where $I(z)$ is the identity of $\cb(\ch_z)$. One implication is trivial. and the reverse implication is proved in Lemma
8.4.1 of \cite{DX0}.
\end{proof}
We collect the main results of interest for us in the following
\begin{thm}
\label{minsepr}
Let $(\ga,\a,\om)$ be a $C^*$--dynamical system as above. Suppose that $\ga$ is separable,  
$s(\om)\in\ga^{**}$, and 
$$
\gz=\pi_\om(\ga)'\cap\{U_\om\}'\subset\gz_\om\,.
$$
Consider the orthogonal measure $\n$ corresponding to $\gz$ given in \eqref{dcpz}. The following assertions hold true.
\begin{itemize}
\item[(i)] 
\begin{align*}
\big(\ch_\om,\pi_{\om},U_\om,\Om\big)\cong&
\bigg(\int^{\oplus}_{\partial\ce_\bz(\ga)}\ch_{\psi}\di\n(\psi),
\int^{\oplus}_{\partial\ce_\bz(\ga)}\pi_{\psi}\di\n(\psi),\\
&\int^{\oplus}_{\partial\ce_\bz(\ga)}U_{\psi}\di\n(\psi),
\int^{\oplus}_{\partial\ce_\bz(\ga)}\Psi\di\n(\psi)\bigg)\,.
\end{align*}
\end{itemize}
In addition, there exist a measurable set $F\subset\partial\ce_\bz(\ga)$ of full measure such that for $\psi\in F$,
\begin{itemize}
\item[(ii)] $s(\psi)\in Z(\ga^{**})$, and 
$J_\Om=\int^{\oplus}_{\partial\ce_\bz(\ga)}J_{\Psi}\di\n(\psi)$;
\item[(iii)] $J_{\Psi}U_{\psi}=U_{\psi}J_{\Psi}$.
\end{itemize}
\end{thm}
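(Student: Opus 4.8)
The plan is to read off all three assertions from the theory of orthogonal measures combined with Dixmier's reduction theory, the separability hypothesis being exactly what upgrades ``pseudo--supported'' to ``supported'' and what makes the measurable disintegration of the Tomita objects available. (I read the stated $s(\om)\in\ga^{**}$ as the standing assumption $s(\om)\in Z(\ga^{**})$ of the section, without which the modular theory below does not even apply.) For (i): since $\gz=\pi_\om(\ga)'\cap\{U_\om\}'$ is contained in the Abelian algebra $\gz_\om$, it is an Abelian von Neumann subalgebra of $\pi_\om(\ga)'$, so by Theorem \ref{br2} the system is $\bz$--Abelian and the orthogonal measure $\n$ of \eqref{dcpz} is the unique maximal measure in $M_\om(\ce_\bz(\ga))$. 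As $\gz\subset\{U_\om\}'$, this $\n$ is pseudo--supported on $\partial\ce_\bz(\ga)$, and because $\ga$ is separable every Borel set is a Baire set, so $\n$ is genuinely supported on the ergodic states. The direct integral decomposition of the GNS datum with respect to the diagonalisable algebra $\gz$ is precisely the disintegration furnished by the orthogonal measure (cf. \cite{BR}, Section 4.1--4.4, and \cite{S}), whence $\pi_\om\cong\int^\oplus\pi_\psi\di\n(\psi)$ and $\Om\cong\int^\oplus\Psi\di\n(\psi)$ with $(\ch_\psi,\pi_\psi,\Psi)$ the GNS triple of $\psi$. Finally $\gz\subset\{U_\om\}'$ gives $U_\om\in\gz'$, so $U_\om$ is decomposable and $U_\om=\int^\oplus U_\psi\di\n(\psi)$; fibrewise $U_\psi$ fixes $\Psi$ and implements $\a$, which is the covariant decomposition claimed in (i).

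For the central--support part of (ii), the inclusion $\gz\subset\gz_\om$ gives in particular $\gz\subset\pi_\om(\ga)''$, so Lemma \ref{dixx} yields
\[
\pi_\om(\ga)''=\int^\oplus\pi_\psi(\ga)''\di\n(\psi)\,,\qquad
\pi_\om(\ga)'=\int^\oplus\pi_\psi(\ga)'\di\n(\psi)\,,
\]
the second identity following from the first by the reduction--theoretic commutant theorem, legitimate because $\gz$ sits in the centre of $\pi_\om(\ga)''$. Now $s(\om)\in Z(\ga^{**})$ means $\Om$ is separating for $\pi_\om(\ga)''$, equivalently cyclic for $\pi_\om(\ga)'$; since cyclicity of a vector for a decomposable von Neumann algebra descends to the fibres, $\Psi$ is cyclic for $\pi_\psi(\ga)'$ for $\n$--almost every $\psi$, i.e. separating for $\pi_\psi(\ga)''$. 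Thus $s(\psi)\in Z(\ga^{**})$ on a full--measure set $F$.

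For the modular statements in (ii) and for (iii), on $F$ each $\Psi$ is cyclic (by the GNS construction) and separating for $\pi_\psi(\ga)''$, so the fibre Tomita operators $S_\Psi=J_\Psi\D_\Psi^{1/2}$ are well defined. Evaluating the global Tomita operator on the decomposable core $\pi_\om(\ga)''\Om$ shows $S_\Om=\int^\oplus S_\Psi\di\n(\psi)$, and uniqueness of the polar decomposition under direct integrals forces $J_\Om=\int^\oplus J_\Psi\di\n(\psi)$ (together with $\D_\Om=\int^\oplus\D_\Psi\di\n(\psi)$), which is (ii). For (iii), $J_\Om$ commutes with $U_\om$ by \cite{NSZ}, Proposition 3.3; writing both as direct integrals and using that products of decomposable operators decompose fibrewise yields $J_\Psi U_\psi=U_\psi J_\Psi$ for $\n$--almost every $\psi$, after shrinking $F$ by a null set.

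The main obstacle is not the algebra but the measurable handling of the metric data: the descent of the cyclic/separating property to the fibres and, above all, the decomposability of the antilinear operator $S_\Om$ together with the identity $S_\Om=\int^\oplus S_\Psi\di\n(\psi)$. This rests squarely on separability, which guarantees that the domains $\pi_\psi(\ga)''\Psi$ form a measurable field of cores and that the single exceptional null set can be chosen to serve all three fibre conditions simultaneously; once this is in place, (i), the two decompositions above, and the commutation (iii) are essentially formal.
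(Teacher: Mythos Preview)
Your proposal is correct and largely parallels the paper's argument: the orthogonal measure associated to $\gz$, the use of separability to pass from ``pseudo--supported'' to ``supported'' on $\partial\ce_\bz(\ga)$, the appeal to Lemma \ref{dixx} to decompose $\pi_\om(\ga)''$ and $\pi_\om(\ga)'$, and the derivation of (iii) from $J_\Om U_\om=U_\om J_\Om$ via \cite{NSZ} are exactly the paper's ingredients.

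The one genuine difference is in the internal organisation of (ii). You first establish $s(\psi)\in Z(\ga^{**})$ directly, by observing that cyclicity of $\Om$ for $\pi_\om(\ga)'$ descends to the fibres, hence $\Psi$ is separating for $\pi_\psi(\ga)''$ almost everywhere; only then do you define the fibre Tomita operators $S_\Psi$, decompose $S_\Om=\int^\oplus S_\Psi$, and read off $J_\Om=\int^\oplus J_\Psi$ from uniqueness of the polar decomposition. The paper reverses this: it first decomposes the \emph{bounded} antilinear $J_\Om$ (which commutes with $\gz$ by \eqref{jei}) as $\int^\oplus J(\psi)$, then checks on countable generating sets that almost every $J(\psi)$ fixes $\Psi$ and conjugates $\pi_\psi(\ga)''$ onto $\pi_\psi(\ga)'$; these two facts together force $\Psi$ to be cyclic for $\pi_\psi(\ga)'$, hence separating, and identify $J(\psi)$ with $J_\Psi$. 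Your route is arguably more natural and yields the decomposition of $\D_\Om$ for free (the paper only remarks on this after the proof, citing \cite{BF}); the paper's route has the advantage of working entirely with bounded operators, avoiding the measurable handling of the unbounded closed $S_\Om$ and its fibrewise cores that you flag as the main technical point.
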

\begin{proof}
We sketch the proof which is perhaps well--known to the experts, see e.g. Theorem A.13 of \cite{Su} for a more general situation.

(i) Consider $Z:=\spec(\gz)$, and by $\m$ the Radon measure (i.e. the {\it basic measure}, see \cite{DX}, Section I.7.2)
induced by the vector $\Om$, which is separating for $\gz$. We have 
$\gz\sim C(Z)\sim L^\infty(Z,\di\m)$. By reasoning as in part 1 of Proposition 3.1 of \cite{VW}, we can define a state $\om_z$ for each $z\in Z$. As $\gz=\pi_\om(\ga)'\cap\{U_\om\}'$, it is easy to show that the states $\om_z$ are invariant under $\a$. We decompose $\ch_\om$ by using  
$$
\G_0:=\{\pi_z(A)\Om_z\mid A\in \ga\}\subset\prod_{z\in Z}\ch_z
$$
as the generating fields of square--integrable vectors, see e.g. \cite{DX, T, W}. Here, 
$\big(\ch_z,\pi_z,U_z,\Om_z\big)$ is the GNS covariant representation of $\om_z$. Then first
$\ch_\om(A)=\int^\oplus_Z \ch_z\di\m(z)$, and it is easy to verify that $\{\pi_z\}_{z\in Z}$ is a measurable field of representations, $\{U_z\}_{z\in Z}$ a measurable field of unitary operators, and $\{\Om_z\}_{z\in Z}$ is a measurable field of unit vectors such that 
$\pi_\om(A)=\int^\oplus_Z \pi_z(A)\di\m(z)$, $A\in\ga$, $U_\om=\int^\oplus_Z U_z\di\m(z)$, and 
$\Om=\int^\oplus_Z \Om_z\di\m(z)$. The map $z\in Z\mapsto f(z):=\om_z\in\cs(\ga)$
is measurable and induces on $\cs(\ga)$ the Radon measure 
\begin{equation*}
\n(A):=\m\circ f^{-1}(A)\,,\quad A\in\cb_0\,.
\end{equation*}
Here $\cb_0$ denotes the $\s$--algebra of the Baire subsets of $\cs(\ga)$, the latter equipped with the $*$--weak topology.
Such a measure is nothing but the orthogonal measure corresponding to 
$\gz\subset\pi_\om(\ga)'$. As all the states $\om_z$ are invariant, $\n$ is automatically supported on the set of invariant states $\ce_\bz(\ga)$, and pseudo--supported on the ergodic states $\partial\ce_\bz(\ga)$ as $\gz=\pi_\om(\ga)'\cap\{U_\om\}'$. We then obtain again the decomposition \eqref{dcpz} for the state $\om$. Notice that, at this stage we do not have used any separable assumption for $\ga$, neither $\gz\subset\gz_\om$, or $\s(\om)\in Z(\ga^{**})$.

(ii) and (iii) Suppose now that $\ga$ is separable, and in addition $s(\om)\in Z(\ga^{**})$,  
$\gz\subset\gz_\om$. First $\n$ is supported on the ergodic states $\partial\ce_\bz(\ga)$ because it is a Baire set.
One shows that, $J_\Om$ is decomposable with decomposition
$J_\Om=\int^\oplus_{\partial\ce_\bz(\ga)}J(\psi)\di\n(\psi)$, because by \eqref{jei} it can be considered as an anti--linear operator commuting with $\gz$. Then as $J_\Om\Om=\Om$, there exists a measurable subset $F_1$ of full measure such that
$$
J_\Psi\Psi=\Psi\,,\quad \psi\in F_1\,.
$$
On the other hand, by using Lemma \ref{dixx}, $\pi_\om(\ga)''
=\int^\oplus_{\partial\ce_\bz(\ga)}\pi_\psi(\ga)''\di\n(\psi)$, 
$\pi_\om(\ga)'=\int^\oplus_{\partial\ce_\bz(\ga)}\pi_\psi(\ga)'\di\n(\psi)$,
$\gz_\om=\int^\oplus_{\partial\ce_\bz(\ga)}\gz_\psi\di\n(\psi)$.
Then for each $X\in\pi_\om(\ga)''$ with $Y=J_\Om XJ_\Om\in\pi_\om(\ga)'$, we get
$J_\Psi X(\psi)J_\Psi=Y(\psi)$, a.e. Fix a countable selfadjoint subsets $\{X_i\}_{i\in\bn}$, 
$\{Y_i\}_{i\in\bn}$, and $\{C_i\}_{i\in\bn}$ such that  $\{X_i(\psi)\}_{i\in\bn}$, 
$\{Y_i(\psi)\}_{i\in\bn}$, and $\{C_i(\psi)\}_{i\in\bn}$ are generating $\pi_\psi(\ga)''$, $\pi_\psi(\ga)'$, and $\gz_\psi$ respectively a.e. We find two measurable subsets $F_2$ and $F_3$ of full measure such that 
$$
J_\Psi X_i(\psi)J_\Psi=Y_i(\psi)\,,\quad i\in\bn
$$
simultaneously for $\psi\in F_2$, and
$$
J_\Psi C_i(\psi)J_\Psi=C^*_i(\psi)\,,\quad i\in\bn
$$
simultaneously for $\psi\in F_3$.  This means that, on $F_2$, 
$J_\Psi\pi_\om(\ga)''J_\Psi=\pi_\om(\ga)'$, and on $F_3$, $J_\Psi CJ_\Psi=C^*$, $C\in\gz_\psi$.
As $J_\Om U_\Om=U_\Om J_\Om$ (cf. Proposition 3.3 of \cite{NSZ}), we have 
$J_\Psi U_\psi=U_\psi J_\Psi$ on a measurable set $F_4$ of full measure. Thus, for   
$\psi\in F:=\bigcap_{i=1}^4 F_i$, $s(\psi)\in Z(\ga^{**})$, $J(\psi)=J_\Psi$, and finally $J_\Psi$ commutes with $U_\psi$.
\end{proof}
Notice that, it is possible to decompose simultaneously also the modular operator, obtaining
$\D_{\Om}=\int^\oplus_{\partial\ce_\bz(\ga)}\D_\Psi\di\n(\psi)$, see e.g. Appendices A, B of \cite{BF}, and the references cited therein.

The reduction theory in non separable situation presents several pathologies, see for example 
\cite{VW}. In addition, the proof of the key result relative to the decomposition of $\gz'$ in part (2) of Proposition 3.1 of \cite{VW} appears not proved, see \cite{Sc} for a connected counterexample. On the other hand, in non separable case the decomposition of $\gz'$ presented in \cite{Te} does not guarantees that the decomposition of the cyclic vector $\Om$ for $M\subset\gz'$, which is fiber--wise cyclic for $\gz'$ by construction, is still fiber--wise cyclic when restricted to any sub algebra $M$ as above. However, it is of interest to prove the following conjecture for the general non separable situation.
\begin{con}
\label{cori}
Let $\ga$ be a unital $C^*$--algebra, and $\om\in\cs(\ga)$ with $s(\om)\in Z(\ga^{**})$. Suppose that $\gz\subset\gz_\om$. Then the orthogonal measure $\n$ in \eqref{dcpz} corresponding to 
$\gz$ is pseudo--supported on the set of states 
$\{\f\in\cs(\ga)\mid s(\f)\in Z(\ga^{**})\}$.
\end{con}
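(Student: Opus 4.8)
The plan is to reduce the assertion to a fibre--wise cyclicity statement for the commutant and to realise it through the global modular conjugation $J_\Om$, which is at our disposal precisely because $s(\om)\in Z(\ga^{**})$. Recall that for a state $\f\in\cs(\ga)$ one has $s(\f)\in Z(\ga^{**})$ if and only if $\F$ is separating for $\pi_\f(\ga)''$, equivalently if and only if $\F$ is cyclic for the commutant $\pi_\f(\ga)'$. Hence the Conjecture is equivalent to asserting that the orthogonal measure $\n$ of \eqref{dcpz} is pseudo--supported on
$$
\{\f\in\cs(\ga)\mid\overline{\pi_\f(\ga)'\F}=\ch_\f\}\,.
$$
Writing $Z=\spec(\gz)$ and $\gz\cong C(Z)\cong L^\infty(Z,\m)$ with $\m$ the basic measure induced by $\Om$, the orthogonal--measure formalism of Section 4.1 of \cite{BR} furnishes a measurable family of states $\om_z$, $z\in Z$, with $\n=\m\circ f^{-1}$ for $f(z)=\om_z$; the target becomes the pseudo--support statement that $s(\om_z)\in Z(\ga^{**})$ for $\m$--almost every $z$.

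First I would extract the consequences of $\gz\subset\gz_\om$ at the level of modular theory. By \eqref{jei} we have $J_\Om CJ_\Om=C^*$ for every $C\in\gz$, so $J_\Om$ normalises the diagonal algebra $\gz$ and may be viewed as an antilinear operator commuting with it; moreover $J_\Om\Om=\Om$ and $J_\Om\pi_\om(\ga)''J_\Om=\pi_\om(\ga)'$. In the separable case one disintegrates $J_\Om=\int^\oplus_Z J_z\di\m(z)$ and checks, on a set of full measure, that $J_z$ is a modular conjugation for the pair $\big(\pi_z(\ga)'',\Om_z\big)$, whence $\Om_z$ is cyclic and separating; this is exactly the mechanism behind Theorem \ref{minsepr}(ii)--(iii). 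The non--separable proposal is to replace these almost--everywhere selections by assertions about Baire sets only: for each fixed $a\in\ga$ the map $z\mapsto\om_z(a^*a)$, together with the matrix coefficients governing $J_\Om$, is Baire measurable on $Z$, and one would certify cyclicity of $\Om_z$ for $\pi_z(\ga)'$ by testing it against the vectors $\pi_z(b)J_z\pi_z(c)\Om_z$, transporting the global identity $\overline{\pi_\om(\ga)'\Om}=\ch_\om$ down to the fibres up to Baire--negligible sets, which is all that pseudo--support requires.

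The structural input that makes this transport work is the decomposition of the commutant, $\pi_\om(\ga)'=\int^\oplus_Z\pi_z(\ga)'\di\m(z)$, together with the fibre--wise cyclicity of $\Om_z$ for $\pi_z(\ga)'$. Granting it, the projection $P$ onto $\overline{\pi_\om(\ga)'\Om}$ lies in $\gz'$, hence is decomposable with fibres the projections onto $\overline{\pi_z(\ga)'\Om_z}$; since $s(\om)\in Z(\ga^{**})$ forces $P$ to be the identity, a decomposable projection equal to $I$ has fibres equal to $I(z)$ off a Baire--negligible set, giving cyclicity of $\Om_z$ for $\pi_z(\ga)'$, and therefore $s(\om_z)\in Z(\ga^{**})$, off a Baire--negligible set.

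The main obstacle is precisely this decomposition of the commutant in the non--separable setting. As recalled before the statement, the analogous claim for $\gz'$ in part (2) of Proposition 3.1 of \cite{VW} is not actually established and is put in doubt by the connected counterexample of \cite{Sc}, while the construction of \cite{Te} does not guarantee that the decomposition of the cyclic vector $\Om$ for $M=\pi_\om(\ga)''\subset\gz'$ remains fibre--wise cyclic once restricted from $\gz'$ to $M$. Consequently the identity $\pi_\om(\ga)'=\int^\oplus_Z\pi_z(\ga)'\di\m(z)$, on which the whole scheme rests, cannot be invoked off the shelf. Establishing the Baire--set transport of cyclicity without it --- presumably by a direct manipulation of $J_\Om$ inside the orthogonal--measure formalism of \cite{BR}, bypassing genuine measurable fields of Hilbert spaces --- is where the real difficulty lies, and is what keeps the assertion at the level of a conjecture.
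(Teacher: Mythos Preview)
The statement you are attempting is labelled \texttt{Conjecture} in the paper, and the paper gives \emph{no proof} for it; it is explicitly presented as an open problem for the general non--separable case. So there is no ``paper's own proof'' to compare against.

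Your proposal is, in effect, not a proof either: you sketch the natural strategy (disintegrate $J_\Om$ using $\gz\subset\gz_\om$, transport cyclicity of $\Om$ for $\pi_\om(\ga)'$ to the fibres, conclude $s(\om_z)\in Z(\ga^{**})$ almost everywhere), and then you correctly isolate the obstruction, namely that the decomposition $\pi_\om(\ga)'=\int^\oplus_Z\pi_z(\ga)'\di\m(z)$ is not available in the non--separable setting. This is precisely the content of the paragraph that the paper places \emph{before} stating the Conjecture: the claimed decomposition of $\gz'$ in \cite{VW} is not established, the counterexample in \cite{Sc} casts doubt on it, and the alternative construction in \cite{Te} does not guarantee fibre--wise cyclicity of $\Om$ for the sub--algebra $M$. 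Your discussion mirrors that commentary faithfully.

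In short: you have not proved the Conjecture, but you were not expected to --- the paper does not prove it. Your analysis of the obstruction is accurate and aligns with the paper's own. The one thing to add is that the separable instance of your scheme is exactly what the paper \emph{does} prove as Theorem~\ref{minsepr}(ii), so your first two paragraphs recover a known result rather than make progress on the non--separable Conjecture itself.
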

The reader is also referred to \cite{R} for some connected results.

\section{a nonconventional ergodic theorem: ergodic case}

We now pass to the natural application of Theorem \ref{fu1} relative to generic measures, 
to "quantum diagonal measures" arising from an ergodic dynamical system. 
We start with two integers $k_1,k_2\in\bz$, and avoid the trivial cases $k_1=0$, 
$k_1=k_2$ which can be managed by von Neumann Mean Ergodic Theorem, and $k_2=0$ managed by Kovacs--Sz\"ucs Ergodic Theorem (see e.g. \cite{BR}, Proposition 4.3.8). Denote by $\bg\subset\bz\times\bz$ the set of the remaining integers describing the non trivial situations.
Let $\big(\ga,\a,\om\big)$ be $C^*$--dynamical system based on the action of $\bz$. We also drop some subscripts when it is no matter of confusion. Denote $M:=\pi_\om(\ga)''$, $M'=\pi_\om(\ga)'$.  
Suppose further that the support $s(\om)$ in $\ga^{**}$
is central. Let $\gam:=M\ots{max}M'$ be the completion of the algebraic 
tensor product $\gn:=M\odot M'$ w.r.t. the maximal $C^*$--norm (cf. 
\cite{T}, Section IV.4). It is easily seen that, on $\gam$ the 
following two states are automatically well--defined. The first one is 
the canonical product state 
$$
\f(A\otimes B):=\langle A\Om,\Om\rangle\langle B\Om,\Om\rangle\,,
\quad A\in M\,,\,\,B\in M'\,.
$$
The second one is uniquely defined by
$$
\r(A\otimes B):=\langle AB\Om,\Om\rangle\,,
\quad A\in M\,,\,\,B\in M'\,.
$$
The state $\r$ can be considered the (quantum analogue of the) ``diagonal 
measure'' of the ``measure'' $\f$. Fix $k_1,k_1\in\bg$.
On $\gam$ is also defined the automorphism
$$
\g:=\ad{}\!_{U_\om^{k_1}}\otimes\ad{}\!_{U_\om^{k_2}}\,,
$$
see \cite{T}, Proposition IV.4.7. Of course, $\big(\gam,\g,\f\big)$ is 
a $C^*$--dynamical system whose GNS covariant representation is 
precisely 
$\big(\ch_\om\otimes\ch_\om,\id\otimes\id,U_\om^{k_1}\otimes U_\om^{k_2},\Om\otimes\Om\big)$. 
Notice that the 
$*$--subalgebra $\gn$ is globally stable under the action of $\g$.
In addition, again by Proposition IV.4.7 of \cite{T}, 
$$
\s(A\otimes B):=AB\,,
\quad A\in M\,,\,\,B\in M'\,.
$$
uniquely defines a representation of $\gam$ on $\ch_\om$ such that 
$\big(\ch_\om,\s,\Om\big)$ is precisely the GNS representation $\big(\ch_\r,\pi_\r,\rm{P}\big)$ of $\r$.
For $(k_1,k_2)\in\bg$, define
$$
\S(k_1,k_2):=\{(v,w)\in\s_{\mathop{\rm pp}}
(U_\om)\times\s_{\mathop{\rm pp}}(U_\om)\,:\,
v^{k_1}w^{k_2}=1\}\,.
$$
Then by Lemma 4.18 of \cite{Fu1},
\begin{equation}
\label{tspd}
E^{U_\om^{k_1}\otimes U_\om^{k_2}}_{1}=\bigoplus_{s\in\S(k_1,k_2)}E^{U_\om}_{v_{s}}\otimes E^{U_\om}_{w_{s}}\,.
\end{equation}
In addition, consider the series
\begin{equation}
\label{addit1}
V=\sum_{s\in\S(k_1,k_2)}\langle\,{\bf\cdot}\,,V_{v_s}\Om\otimes W_{w_s}\Om\rangle
V_{v_s}W_{w_s}\Om\,,
\end{equation}
where $V_{v}\in M$, $W_{w}\in M'$ (equivalently $V_{v}\in M'$, $W_{w}\in M$) with 
$U_\om V_{v}U_\om^*=vV_{v}$, $U_\om W_{w}U_\om^*=wW_{w}$ with $z^{k_1}w^{k_2}=1$.
\begin{prop}
\label{diam1}    
Let $\big(\ga,\a,\om\big)$ be a $C^*$--dynamical system such 
that $s(\om)\in Z(\ga^{**})$. Fix $(k_1,k_2)\in\bg$ and suppose that 
$U_\om^{k_2-k_1}$ is ergodic. Then the following assertions hold true.
\begin{itemize}
\item[(i)] The state $\r\in\cs(\gam)$ is 
generic for $\big(\gam,\g,\f\big)$ w.r.t. $\gn$.
\item[(ii)] The sum in \eqref{addit1} converges in the strong operator topology and provides the explicit expression for the isometry $V$ (depending on $k_1,k_2$) in \eqref{fu4}.
\item[(iii)] We get for the Tomita conjugation $J_\Om$,
\begin{equation}
\label{tomi}
J_\Om V=VJ_\Om\otimes J_\Om\,.
\end{equation}
\end{itemize}
\end{prop}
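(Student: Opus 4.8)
The plan is to establish genericity of $\r$ in the sense of Definition~\ref{fu} (so that Theorem~\ref{fu1} applies), then to identify the partial isometry explicitly by an orthonormality argument, and finally to verify the modular relation \eqref{tomi} on a distinguished orthonormal basis. Throughout I use the following reduction: since $U_\om^{k_2-k_1}$ is ergodic and $U_\om\Om=\Om$, the inclusion $E^{U_\om}_{1}\ch_\om\subseteq E^{U_\om^{k_2-k_1}}_{1}\ch_\om=\bc\Om$ shows that $U_\om$ itself is ergodic. Hence the ``in addition'' part of Proposition~\ref{gnsc1} applies and, for every $v\in\s_{\mathop{\rm pp}}(U_\om)$, the eigenspace $E^{U_\om}_{v}\ch_\om=\bc V_{v}\Om=\bc W_{v}\Om$ is one dimensional, with $V_{v}\in M$ and $W_{v}\in M'$ unitary.

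For assertion (i), recall that the GNS datum of $\f$ is $\big(\ch_\om\otimes\ch_\om,\id\otimes\id,U_\om^{k_1}\otimes U_\om^{k_2},\Om\otimes\Om\big)$, so condition (i) of Definition~\ref{fu} demands $\overline{\gn^{\g}(\Om\otimes\Om)}=E^{U_\om^{k_1}\otimes U_\om^{k_2}}_{1}(\ch_\om\otimes\ch_\om)$. As $\g$ acts on $\gn$ as $\ad{}\!_{U_\om^{k_1}\otimes U_\om^{k_2}}$, every $X\in\gn^{\g}$ commutes with $U_\om^{k_1}\otimes U_\om^{k_2}$ and therefore sends $\Om\otimes\Om$ into the invariant subspace; conversely $V_{v_s}\otimes W_{w_s}\in\gn^{\g}$ for each $s\in\S(k_1,k_2)$, and by \eqref{tspd} the vectors $V_{v_s}\Om\otimes W_{w_s}\Om$ exhaust that subspace. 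For condition (ii) I compute on elementary tensors: using $U_\om\Om=\Om$ one obtains $\r(\g^{n}(A\otimes B))=\langle A\,U_\om^{n(k_2-k_1)}B\Om,\Om\rangle$, whose Cesaro averages converge, by the von Neumann Mean Ergodic Theorem and the ergodicity of $U_\om^{k_2-k_1}$, to $\langle A\,E^{U_\om^{k_2-k_1}}_{1}B\Om,\Om\rangle=\langle A\Om,\Om\rangle\langle B\Om,\Om\rangle=\f(A\otimes B)$. Linearity extends this to all of $\gn$, so $\r$ is generic.

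For assertion (ii), put $e_s:=V_{v_s}\Om\otimes W_{w_s}\Om$ and $f_s:=V_{v_s}W_{w_s}\Om$, $s\in\S(k_1,k_2)$. The vectors $e_s$ form an orthonormal basis of $E^{U_\om^{k_1}\otimes U_\om^{k_2}}_{1}\ch_\om\otimes\ch_\om$ by \eqref{tspd}. The decisive point is that $\{f_s\}$ is orthonormal as well: $f_s$ is a unit eigenvector of $U_\om$ for the eigenvalue $v_sw_s$, and $s\mapsto v_sw_s$ is injective \emph{precisely} because $U_\om^{k_2-k_1}$ is ergodic. Indeed, if $v_sw_s=v_tw_t$ then $\xi:=v_sv_t^{-1}=w_tw_s^{-1}\in\s_{\mathop{\rm pp}}(U_\om)$, and dividing the relations $v_s^{k_1}w_s^{k_2}=1=v_t^{k_1}w_t^{k_2}$ gives $\xi^{k_2-k_1}=1$; the corresponding eigenvector then lies in $E^{U_\om^{k_2-k_1}}_{1}\ch_\om=\bc\Om$, forcing $\xi=1$. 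Consequently $\sum_{s}\langle\,\cdot\,,e_s\rangle f_s$ converges strongly to a partial isometry with initial space $E^{U_\om^{k_1}\otimes U_\om^{k_2}}_{1}\ch_\om\otimes\ch_\om$; since on $e_s$ it returns $\s(V_{v_s}\otimes W_{w_s})\Om=f_s$, exactly the value prescribed by \eqref{fu4}, it agrees with $V$ on a spanning set while both vanish on the orthogonal complement, so they coincide.

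For assertion (iii), both $J_\Om V$ and $V(J_\Om\otimes J_\Om)$ are antilinear, and since $J_\Om$ commutes with $U_\om$ the antiunitary $J_\Om\otimes J_\Om$ preserves $E^{U_\om^{k_1}\otimes U_\om^{k_2}}_{1}\ch_\om\otimes\ch_\om$, so it suffices to check \eqref{tomi} on the basis $\{e_s\}$. I would first record a modular simplification: because $U_\om S_\Om=S_\Om U_\om$ on the core $M\Om$ and $J_\Om U_\om=U_\om J_\Om$, the operator $\D_\Om$ commutes with $U_\om$, so each one--dimensional $E^{U_\om}_{v}\ch_\om$ is $\D_\Om$--invariant; comparing norms in $V_v^*\Om=S_\Om V_v\Om=J_\Om\D_\Om^{1/2}V_v\Om$ together with $\|V_v^*\Om\|=\|V_v\Om\|=1$ forces $\D_\Om^{1/2}V_v\Om=V_v\Om$, whence $J_\Om V_v\Om=V_v^*\Om$ and, via $S_\Om^{*}$, likewise $J_\Om W_w\Om=W_w^*\Om$. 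Using these together with $V_{v_s}^{*}\otimes W_{w_s}^{*}\in\gn^{\g}$ and \eqref{fu4}, one computes $V(J_\Om\otimes J_\Om)e_s=V_{v_s}^{*}W_{w_s}^{*}\Om$, while $J_\Om f_s=(J_\Om V_{v_s}J_\Om)(J_\Om W_{w_s}J_\Om)\Om$ with $J_\Om V_{v_s}J_\Om\in M'$ and $J_\Om W_{w_s}J_\Om\in M$ by \eqref{jei}; both vectors lie in the one--dimensional eigenspace $E^{U_\om}_{\overline{v_sw_s}}\ch_\om$. The step I expect to be the main obstacle is the final matching of these two expressions: although they agree factorwise on $\Om$, equating them amounts to a commutation relation of the shape $[\,J_\Om V_{v_s}J_\Om,\,W_{w_s}^{*}\,]\Om=0$ between two eigenoperators of $M'$, equivalently to the symmetry of the phase cocycle governing products of eigenoperators. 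Establishing this — through the one--dimensionality of the eigenspaces, the subgroup structure of $\s_{\mathop{\rm pp}}(U_\om)$, and the interchange $J_\Om M J_\Om=M'$ that swaps the two tensor legs — is where the centrality of $s(\om)$ and the full force of ergodicity must be used, and is the point requiring the most care.
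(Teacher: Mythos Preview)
Your arguments for (i) and (ii) are correct. For (ii) you take a genuinely different and rather elegant route: the paper proves strong convergence of \eqref{addit1} by a Cauchy--type $\eps$--approximation using vectors with finitely supported spectral expansion, whereas you observe directly that $s\mapsto v_sw_s$ is injective (from $\xi^{k_2-k_1}=1$ and ergodicity of $U_\om^{k_2-k_1}$), so that $\{f_s\}$ is orthonormal and the series is the canonical expansion of a partial isometry. Both approaches yield the same $V$; yours gives the extra information that $V$ is actually an isometry from the invariant subspace onto $\overline{\mathrm{span}}\{f_s\}$.

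Part (iii), however, is left unfinished. You correctly reduce the identity to
\[
(J_\Om V_{v_s}J_\Om)(J_\Om W_{w_s}J_\Om)\Om\;=\;V_{v_s}^{*}W_{w_s}^{*}\Om\,,
\]
and you note that both sides are unit vectors in the one--dimensional eigenspace $E^{U_\om}_{\overline{v_sw_s}}\ch_\om$, hence differ by a phase. But matching that phase is exactly the problem, and your appeal to ``one--dimensionality, the subgroup structure, and the interchange $J_\Om M J_\Om=M'$'' is not an argument --- it is a list of ingredients. As you yourself write, the equality amounts to $[\,J_\Om V_{v_s}J_\Om,\,W_{w_s}^{*}\,]\Om=0$ with both factors in $M'$; this is a genuine cocycle identity for the projective representation $v\mapsto$(eigenunitary), and nothing you have written forces it.

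The paper closes this gap by a device you have not used: the parenthetical remark, stated just before the proposition, that the series \eqref{addit1} gives the \emph{same} operator $V$ whether one takes $V_v\in M,\,W_w\in M'$ or, ``equivalently'', $V_v\in M',\,W_w\in M$. Granting this symmetry, the proof of (iii) is one line: since $J_\Om V_vJ_\Om\in M'$ and $J_\Om W_wJ_\Om\in M$ are eigenunitaries with eigenvalues $\bar v,\bar w$ satisfying $\bar v^{k_1}\bar w^{k_2}=1$, the swapped version of \eqref{addit1} gives directly
\[
V\big(J_\Om V_v\Om\otimes J_\Om W_w\Om\big)=(J_\Om V_vJ_\Om)(J_\Om W_wJ_\Om)\Om=J_\Om V_vW_w\Om\,,
\]
which is \eqref{tomi}. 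So the missing step in your approach is precisely this $M\leftrightarrow M'$ invariance of $V$; you should either prove it (it is where the real work hides) or reorganise your argument around it as the paper does.
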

\begin{proof}
The proof follows the lines of Proposition 4.1 of \cite{F3}. 

(i) Consider the linear generator $A\otimes B$ of $\gn$, with $A\in M$ and $B\in M'$. Then by the von Neumann Ergodic Theorem,
\begin{align*}
\frac{1}{N}\sum_{n=0}^{N-1}\psi(\g^{n}(A&\otimes B))
=\frac{1}{N}\sum_{n=0}^{N-1}\langle AU^{n(k_2-k_1)}B\Om,\Om\rangle\\
&\equiv\bigg\langle A\bigg(\frac{1}{N}\sum_{n=0}^{N-1}U^{n(k_2-k_1)}\bigg)B\Om,
\Om\bigg\rangle\\
&\longrightarrow\langle A\Om,\Om\rangle\langle B\Om,\Om\rangle
\equiv\f(A\otimes B)\,.
\end{align*}
As $E^{U^m}_1\geq E^{U}_1$, 
if $U^{k_2-k_1}$ is 
ergodic, $U$ is 
ergodic too, and by Proposition \ref{gnsc1} $E^{U}_{v}\ch$ is one dimensional, 
$v\in\s_{\mathop{\rm pp}}(U)$. Thus,
$E^{U}_{v_{s}}\ch$ and $E^{U}_{w_{s}}\ch$ are generated by 
$V_{v_{s}}\Om$, $W_{w_{s}}\Om$, where $V_{v_{s}}$ and $W_{w_{s}}$ are 
unitaries of $M_{v_{s}}$, $(M')_{w_{s}}$ respectively. By \eqref{tspd} 
$E^{U}_{v_{s}}\ch\otimes E^{U}_{w_{s}}\ch$ is one dimensional, and it 
is generated by $V_{v_{s}}\Om\otimes W_{w_{s}}\Om$. 
By taking into account $v_{s}^{k_1}w_{s}^{k_2}=1$, $V_{v_{s}}\otimes W_{w_{s}}$ is invariant under $\g$ for each
$s\in\S$. Then we can conclude that 
$\gn^{\g}\Om\otimes\Om$ is dense in $E^{U}_{v_{s}}\ch\otimes E^{U}_{w_{s}}\ch$. 

(ii) For $\xi\in\ch\otimes\ch$ such that 
$E^{U^{k_1}\otimes U^{k_2}}_{1}\xi$ has finite support $S_\xi$, we easily get
$$
V\xi=\sum_{s\in S_\xi}\langle\xi,V_{v_s}\Om\otimes W_{w_s}\Om\rangle
V_{v_s}W_{w_s}\Om\,.
$$
In addition, being the set of such vectors dense in $\ch\otimes\ch$, for each 
$\xi\in\ch\otimes\ch$ we find one of such vectors $\xi_\eps$ with finite support 
$S_{\xi_\eps}$ such that $\|\xi-\xi_\eps\|<\eps/2$. For each finite subsets $R,S\supset\S(k_1,k_2)$, we get
\begin{align*}
&\big\|\sum_{s\in R}\langle\xi,V_{v_s}\Om\otimes W_{w_s}\Om\rangle
V_{v_s}W_{w_s}\Om
-\sum_{s\in S}\langle\xi,V_{v_s}\Om\otimes W_{w_s}\Om\rangle
V_{v_s}W_{w_s}\Om\big\|\\
\leq&\big\|\sum_{s\in R}\langle\xi,V_{v_s}\Om\otimes W_{w_s}\Om\rangle
V_{v_s}W_{w_s}\Om
-\sum_{s\in R}\langle\xi_\eps,V_{v_s}\Om\otimes W_{w_s}\Om\rangle
V_{v_s}W_{w_s}\Om\big\|\\
+&\big\|\sum_{s\in R}\langle\xi_\eps,V_{v_s}\Om\otimes W_{w_s}\Om\rangle
V_{v_s}W_{w_s}\Om
-\sum_{s\in S}\langle\xi_\eps,V_{v_s}\Om\otimes W_{w_s}\Om\rangle
V_{v_s}W_{w_s}\Om\big\|\\
+&\big\|\sum_{s\in S}\langle\xi_\eps,V_{v_s}\Om\otimes W_{w_s}\Om\rangle
V_{v_s}W_{w_s}\Om
-\sum_{s\in S}\langle\xi,V_{v_s}\Om\otimes W_{w_s}\Om\rangle
V_{v_s}W_{w_s}\Om\big\|\\
\leq&\|VP_R(\xi-\xi_\eps)\|+\|VP_S(\xi-\xi_\eps)\|
\leq2\|\xi-\xi_\eps\|=\eps\,,
\end{align*}
where $P_T$ is the selfadjoint projection onto the closed subspace generated by the eigenvectors corresponding to the finite subset 
$T\subset\S(k_1,k_2)$. 

(iii) Notice that $J$ commutes with $U$ (cf. Proposition 3.3 in \cite{NSZ}). Thus, if 
$\xi\in\big(E_{1}^{U^{k_1}\otimes U^{k_2}}\ch\otimes\ch\big)^{\perp}$, then 
$J\otimes J\xi\in\big(E_{1}^{U^{k_1}\otimes U^{k_2}}\ch\otimes\ch\big)^{\perp}$ as well. It is then enough to check \eqref{tomi} on 
$E_{1}^{U^{k_1}\otimes U^{k_2}}\ch\otimes\ch$, the last being generated by the one dimensional subspace 
$E^{U}_{v}\ch\otimes E^{U}_{w}\ch$ for 
$v,w\in\s_{\mathop{\rm pp}}(U)$ with $v^{k_1}w^{k_2}=1$. In addition,
\begin{equation}
\label{addit}
UJV_{v}JU^*=\bar vJV_{v}J\,,\quad UJW_{w}JU^*=\bar wJW_{w}J\,,
\end{equation}
with $\bar v^{k_1}\bar w^{k_2}=1$ too.
By taking into account Proposition \ref{gnsc1} together with \eqref{addit}, and exchanging the role of 
$M$ and $M'$, we obtain
\begin{align*}
&JVV_{v}\Om\otimes W_{w}\Om=JV_{v}W_{w}\Om
=(JV_{z}J)(JW_{w}J)\Om\\
=&V(JV_{v}\Om)\otimes (JW_{w}\Om)=
V(J\otimes J)(V_{v}\Om\otimes W_{w}\Om)\,.
\end{align*}
\end{proof}
Let $\gx_0:=M+M'$ be the linear span of $M$ and $M'$, and $\gx:=\overline{\gx_0}^{\|\,\,\|}$ its norm closure in $\cb(\ch_\om)$. Here we have the main result of the present section.
\begin{thm}
\label{cccc}
Let $\big(\ga,\a,\om\big)$ be a $C^*$--dynamical system such that $s(\om)\in Z(\ga^{**})$.
Fix $(k_1,k_2)\in\bg$, and suppose that
$\om$ ergodic w.r.t. the action of 
$\{\a^{n(k_2-k_1)}\mid n\in\bz\}$. Then for each $X\in \gx$,
\begin{equation}
\label{sta}
\mathop{\rm s\!-\!lim}_{N\to+\infty}
\frac{1}{N}\sum_{n=0}^{N-1}U_\om^{nk_1}XU_\om^{n(k_2-k_1)}
=V\left(X\Om\otimes\,\cdot\,\right)\,.
\end{equation}
\end{thm}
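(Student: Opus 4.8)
The plan is to recognise the Cesàro mean in \eqref{sta}, which I write $T_N(X):=\frac1N\sum_{n=0}^{N-1}U^{nk_1}XU^{n(k_2-k_1)}$, as the operator form of the vector-valued nonconventional limit supplied by Theorem \ref{fu1}, and then to upgrade vector convergence to strong-operator convergence by a density-and-uniform-bound argument. Both $X\mapsto T_N(X)$ and the candidate limit $L_X\colon\eta\mapsto V(X\Om\otimes\eta)$ are linear in $X$ and uniformly bounded: since $U$ is unitary, $\|T_N(X)\|\le\|X\|$ for every $N$, and since $V$ is a partial isometry, $\|L_X\|\le\|X\Om\|\le\|X\|$. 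Hence a standard $3\eps$ argument reduces the assertion from $X\in\gx$ to $X$ in the dense subspace $\gx_0=M+M'$, and by linearity to the two cases $X\in M$ and $X\in M'$ separately.

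For $X=A\in M$, I would apply Theorem \ref{fu1} to $\big(\gam,\g,\f\big)$ --- whose genericity of $\r$ relative to $\gn$ is Proposition \ref{diam1}(i) --- for the generator $A\otimes B'\in\gn$, $B'\in M'$. Recalling $\pi_\r=\s$, $\rm{P}=\Om$, $\F=\Om\otimes\Om$ and using $U\Om=\Om$, one computes $\pi_\r\big(\g^n(A\otimes B')\big)\rm{P}=U^{nk_1}AU^{n(k_2-k_1)}B'\Om$ and $\pi_\f(A\otimes B')\F=A\Om\otimes B'\Om$, so Theorem \ref{fu1} yields $T_N(A)(B'\Om)\to V(A\Om\otimes B'\Om)=L_A(B'\Om)$. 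Because $s(\om)$ is central, $\Om$ is separating for $M$ and therefore cyclic for $M'$, so $M'\Om$ is dense in $\ch_\om$; combined with the uniform bound $\|T_N(A)\|\le\|A\|$ this promotes the convergence to every $\eta\in\ch_\om$, giving $T_N(A)\to L_A$ in the strong operator topology.

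For $X=B'\in M'$ I would avoid re-running the whole machinery with $M$ and $M'$ interchanged and instead use the Tomita conjugation. Since $J_\Om MJ_\Om=M'$ by \eqref{jei}, write $B'=J_\Om AJ_\Om$ with $A\in M$. As $J_\Om$ commutes with $U$ and $J_\Om^2=\id$, one gets $J_\Om T_N(A)J_\Om=T_N(J_\Om AJ_\Om)=T_N(B')$; letting $N\to\infty$, the previous step gives $T_N(A)(J_\Om\eta)\to L_A(J_\Om\eta)$, and applying the isometry $J_\Om$ yields $T_N(B')\to J_\Om L_AJ_\Om$ strongly. Finally the identification $J_\Om L_AJ_\Om=L_{B'}$ is exactly where \eqref{tomi} enters: from $J_\Om V=V(J_\Om\otimes J_\Om)$ and $J_\Om\Om=\Om$ one obtains $J_\Om L_AJ_\Om\,\eta=J_\Om V(A\Om\otimes J_\Om\eta)=V(J_\Om A\Om\otimes\eta)=V(B'\Om\otimes\eta)=L_{B'}\eta$, since $B'\Om=J_\Om A\Om$.

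The boundedness and density estimates are routine; the conceptual core, and the step I expect to be the main obstacle, is the passage from the vector limit of Theorem \ref{fu1} to a genuine strong-operator limit of $T_N(X)$, uniform over the dense domain $M'\Om$. The $M'$-case is the second delicate point, and there the Tomita relation \eqref{tomi} of Proposition \ref{diam1}(iii) does all the work: without it one would have to redevelop Proposition \ref{diam1} with the roles of $M$ and $M'$ exchanged, whereas \eqref{tomi} lets the already-established $M$-case be transported by conjugation with $J_\Om$.
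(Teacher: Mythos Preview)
Your proposal is correct and follows essentially the same route as the paper's proof: the case $X\in M$ via Proposition \ref{diam1} and Theorem \ref{fu1}, the case $X\in M'$ via conjugation by $J_\Om$ together with \eqref{tomi}, then linearity to $\gx_0$ and a $3\eps$ argument to $\gx$. Your write-up is in fact more explicit than the paper's on the $M$ step, spelling out the density of $M'\Om$ (via cyclicity of $\Om$ for $M'$) that turns the vector limit from Theorem \ref{fu1} into the strong-operator limit; the paper compresses this into a single sentence.
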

\begin{proof}
If $A\in M$, \eqref{sta} follows from Proposition
\ref{diam1} and Theorem \ref{fu1}. Let now $A\in M'$. By \eqref{tomi} and Proposition 3.3 in 
\cite{NSZ}, the previous part of the proof leads to
\begin{align*}
\frac{1}{N}\sum_{n=0}^{N-1}U^{nk_1}AU^{n(k_2-k_1)}&\xi=J\frac{1}{N}\sum_{n=0}^{N-1}U^{nk_1}JAJU^{n(k_2-k_1)}J\xi\\
\longrightarrow JV(JA\Om\otimes J&\xi)
=JV(J\otimes J)\left(A\Om\otimes \xi\right)\\
=&V\left(A\Om\otimes\xi\right)\,.
\end{align*}
Thus, \eqref{sta} holds true for $X\in M\cup M'$ and then for its linear span $\gx_0$. Let $X\in\gx$ and choose $X_{\eps}\in \gx_0$ with
$\|X-X_{\eps}\|<\frac{2\eps}{3\|\xi\|}$. Then
\begin{align*}
&\bigg\|\frac{1}{N}\sum_{n=0}^{N-1}U^{nk_1}XU^{n(k_2-k_1)}\xi-V\left(X\Om\otimes\xi\right)\bigg\|\\
\leq\frac{2\eps}{3}&+\bigg\|\frac{1}{N}\sum_{n=0}^{N-1}U^{nk_1}
X_{\eps}U^{n(k_2-k_1)}\xi-V\left(X_{\eps}\Om\otimes\xi\right)\bigg\|\,,
\end{align*}
and the proof follows by the first part.
\end{proof}
As a simple application of the previous results, we get
\begin{cor}
Under the hypotheses of Theorem \ref{cccc},
\begin{align*}
&\lim_{N\to+\infty}\frac{1}{N}\sum_{n=0}^{N-1}\om\left(A_{0}\a^{nk_1}(A_{1})
\a^{nk_2}(A_{2})\right)\\
&=\left\langle V\left(\pi(A_{1})\Om\otimes\pi(A_{2})
\Om\right),\pi(A^{*}_{0})\Om\right\rangle\,.
\end{align*}
\end{cor}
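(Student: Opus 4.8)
The plan is to reduce the correlation average directly to the strong-operator convergence provided by Theorem \ref{cccc}, by rewriting each summand as a matrix coefficient in the GNS representation and isolating the Cesàro mean of \eqref{sta} acting on a fixed vector.

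First I would pass everything into $\big(\ch_\om,\pi_\om,U_\om,\Om\big)$. Writing $\pi:=\pi_\om$ and $U:=U_\om$ and using $\om(\,\cdot\,)=\langle\pi(\,\cdot\,)\Om,\Om\rangle$ together with the covariance relation $\pi(\a^m(A))=U^m\pi(A)U^{-m}$, each term of the average becomes, after using the invariance $U^{-nk_2}\Om=\Om$, the identity $U^{-nk_1}U^{nk_2}=U^{n(k_2-k_1)}$, and transferring $\pi(A_0)$ to the right-hand entry via $\pi(A_0)^*=\pi(A_0^*)$,
\begin{align*}
&\om\big(A_0\a^{nk_1}(A_1)\a^{nk_2}(A_2)\big)\\
&\quad=\big\langle\pi(A_0)U^{nk_1}\pi(A_1)U^{-nk_1}U^{nk_2}\pi(A_2)U^{-nk_2}\Om,\Om\big\rangle\\
&\quad=\big\langle U^{nk_1}\pi(A_1)U^{n(k_2-k_1)}\pi(A_2)\Om,\pi(A_0^*)\Om\big\rangle\,.
\end{align*}

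Averaging over $n$ and factoring out the fixed vector $\pi(A_2)\Om$, the partial sums take the form
\begin{align*}
&\frac1N\sum_{n=0}^{N-1}\om\big(A_0\a^{nk_1}(A_1)\a^{nk_2}(A_2)\big)\\
&\quad=\bigg\langle\bigg(\frac1N\sum_{n=0}^{N-1}U^{nk_1}\pi(A_1)U^{n(k_2-k_1)}\bigg)\pi(A_2)\Om,\pi(A_0^*)\Om\bigg\rangle\,.
\end{align*}
Since $X:=\pi(A_1)\in M\subset\gx_0\subset\gx$, Theorem \ref{cccc} applies and the bracketed operators converge in the strong operator topology to $V(\pi(A_1)\Om\otimes\,\cdot\,)$; evaluating on the fixed vector $\pi(A_2)\Om$ gives norm convergence to $V\big(\pi(A_1)\Om\otimes\pi(A_2)\Om\big)$. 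As the inner product against the fixed vector $\pi(A_0^*)\Om$ is continuous, passing to the limit yields precisely $\big\langle V(\pi(A_1)\Om\otimes\pi(A_2)\Om),\pi(A_0^*)\Om\big\rangle$.

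There is essentially no obstacle beyond Theorem \ref{cccc} itself: the only points worth a remark are that $\pi(A_1)$ indeed lies in $\gx$ (it belongs to $M$), and that strong-operator convergence of the averaged operators is enough here, since one only tests against the single fixed vector $\pi(A_0^*)\Om$.
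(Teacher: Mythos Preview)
Your argument is correct and is essentially identical to the paper's own proof: the paper simply says the result follows from Theorem~\ref{cccc} by rewriting $\om\big(A_0\a^{nk_1}(A_1)\a^{nk_2}(A_2)\big)=\langle U^{nk_1}XU^{n(k_2-k_1)}\xi,\eta\rangle$ with $X=\pi(A_1)$, $\xi=\pi(A_2)\Om$, $\eta=\pi(A_0^*)\Om$, exactly as you do.
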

\begin{proof}
The proof follows by Theorem \ref{cccc}, reasoning as in the proof of Corollary \ref{fiberw}.
\end{proof}

\section{a nonconventional ergodic theorem: non ergodic case}
\label{nestz}

The present section is devoted to the Nonconventional Ergodic Theorem for the non ergodic situation. 

We start with a discrete $C^*$--dynamical system $(\ga,\a,\om)$ with $\ga$ separable if it is not otherwise specified, such that 
$s(\om)\in Z(\ga^{**})$ and
$\gz:=\pi_\om(\ga)'\cap\{U_\om\}'\subset\gz_\om$. We use the results of the standard reduction procedure  collected in Section \ref{sec:red}. We decompose the Hilbert space $\ch_\om$ by considering $\gz$ as the set of diagonal operators, obtaining the simultaneous decomposition of $\pi_\om(\ga)''$, $\pi_\om(\ga)'$, $U_\om$ and $J_\Om$ according to Theorem \ref{minsepr}. Suppose that $k_1=k$, $k_2=k+1$, with $k\in\bz\backslash\{-1,0\}$. Thanks to Proposition \ref{diam1}, a field of partial isometries 
$V(\psi):\ch(\psi)\otimes\ch(\psi)\to\ch(z)$ is defined on the measurable subset 
$F\subset\partial\ce_\bz(\ga)$ of full measure appearing in Theorem \ref{minsepr}. It is given by
\begin{align}
\label{addit11}
V(\psi)\eta\otimes\xi=&\sum_{s_\psi\in\S_\psi(k_1,k_2)}\langle \eta\otimes\xi,V_{v_{s_\psi}}
\Psi\otimes 
W_{w_{s_\psi}}\Psi\rangle\nn\\
\times&V_{v_{s_\psi}}W_{w_{s_\psi}}\Psi\,,
\end{align}
with
$$
\S_\psi(k_1,k_2):=\{(v,w)\in\s_{\mathop{\rm pp}}
(U_\psi)\times\s_{\mathop{\rm pp}}(U_\psi)\,:\,
v^{k_1}w^{k_2}=1\}\,.
$$
Define for example $V(\psi)=0$ on the negligible set $\partial\ce_\bz(\ga)\backslash F$.
\begin{lem}
\label{cinqu}
The field of operators in \eqref{addit11} is measurable and defines a partial isometry
$$
V:\int_{\partial\ce_\bz(\ga)}^\oplus\ch_\psi\otimes\ch_\psi\di\n(\psi)
\longrightarrow \int_{\partial\ce_\bz(\ga)}^\oplus\ch_\psi\di\n(\psi)=\ch_\om\,.
$$
written as
\begin{equation*}
V:=\int_{\partial\ce_\bz(\ga)}^\oplus V(\psi)\di\n(\psi)\,.
\end{equation*}
\end{lem}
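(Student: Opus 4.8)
The plan is to deduce the Lemma from three facts: that each fibre operator $V(\psi)$ is already a partial isometry by Proposition \ref{diam1}, that the field $\psi\mapsto V(\psi)$ is measurable, and that a measurable uniformly bounded field of partial isometries integrates to a partial isometry. Only the measurability is delicate, and this is where I expect the main obstacle to lie.

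First I would record the fibre-wise hypotheses. By Theorem \ref{minsepr}, on the full-measure set $F\subset\partial\ce_\bz(\ga)$ each fibre state $\psi$ is ergodic (the measure $\n$ is carried by $\partial\ce_\bz(\ga)$) and has central support $s(\psi)\in Z(\ga^{**})$; since here $k_2-k_1=1$, the unitary $U_\psi^{k_2-k_1}=U_\psi$ is ergodic by Proposition \ref{gnsc1}. Hence the fibre system $(\ch_\psi,\pi_\psi,U_\psi,\Psi)$ satisfies the hypotheses of Proposition \ref{diam1}: the series \eqref{addit11} converges strongly, and $V(\psi)$ is a partial isometry with $V(\psi)^*V(\psi)=E_1^{U_\psi^{k_1}\otimes U_\psi^{k_2}}$; in particular $\|V(\psi)\|\le1$ for $\psi\in F$, and $V(\psi)=0$ off $F$.

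The key step, measurability, I would establish not from the explicit eigenvector expansion \eqref{addit11}, whose summation set $\S_\psi(k_1,k_2)$ and eigenlines vary with $\psi$, but from the dynamical description of $V(\psi)$ furnished by Theorem \ref{cccc} applied to the ergodic system $(\ga,\a,\psi)$. For $X=\pi_\psi(a)\in\pi_\psi(\ga)''$ that theorem gives
$$
V(\psi)\big(\pi_\psi(a)\Psi\otimes\xi\big)=\mathop{\rm s\!-\!lim}_{N}\frac1N\sum_{n=0}^{N-1}U_\psi^{nk_1}\pi_\psi(a)U_\psi^{n(k_2-k_1)}\xi .
$$
Fixing a countable dense set $\{a_i\}\subset\ga$, the fields $\{\pi_\psi(a_i)\Psi\}_i$ form a fundamental sequence for $\{\ch_\psi\}$ by cyclicity of $\Psi$, whence $\{\pi_\psi(a_i)\Psi\otimes\pi_\psi(a_j)\Psi\}_{i,j}$ is a fundamental sequence for $\{\ch_\psi\otimes\ch_\psi\}$. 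For each pair $(i,j)$ the Ces\`aro means
$$
\frac1N\sum_{n=0}^{N-1}U_\psi^{nk_1}\pi_\psi(a_i)U_\psi^{n(k_2-k_1)}\pi_\psi(a_j)\Psi
$$
are measurable vector fields, being finite sums of products of the measurable fields $U_\psi$ and $\pi_\psi(a)$ applied to the measurable field $\pi_\psi(a_j)\Psi$. Their pointwise strong limit $V(\psi)\big(\pi_\psi(a_i)\Psi\otimes\pi_\psi(a_j)\Psi\big)$ is therefore again measurable, and since these vectors generate the fibres, the field $\psi\mapsto V(\psi)$ is measurable.

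Finally I would assemble the direct integral. The measurable uniformly bounded field $\{V(\psi)\}$ defines a decomposable operator $V=\int^\oplus_{\partial\ce_\bz(\ga)}V(\psi)\di\n(\psi)$ with $\|V\|\le1$, whose adjoint is $\int^\oplus V(\psi)^*\di\n(\psi)$, so that $V^*V=\int^\oplus V(\psi)^*V(\psi)\di\n(\psi)=\int^\oplus E_1^{U_\psi^{k_1}\otimes U_\psi^{k_2}}\di\n(\psi)$. A direct integral of projections is a projection, hence $V^*V$ is a projection and $V$ is a partial isometry, as claimed. The genuine difficulty is exactly the measurability step, and the device that overcomes it is replacing the $\psi$-dependent spectral sum \eqref{addit11} by the Ces\`aro-limit formula of Theorem \ref{cccc}, which exhibits $V(\psi)$ as a strong limit of manifestly measurable fields.
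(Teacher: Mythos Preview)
Your proposal is correct and follows essentially the same route as the paper: reduce to measurability (since the $V(\psi)$ are partial isometries a.e.), and establish measurability by replacing the $\psi$--dependent spectral sum \eqref{addit11} with the Ces\`aro averages of Theorem \ref{cccc}, which are manifestly measurable and converge pointwise to $V(\psi)$ on a fundamental sequence. One minor point: the ergodicity of $U_\psi$ for $\psi\in F$ is a consequence of Theorem \ref{br1} (extremality plus central support), not of Proposition \ref{gnsc1}.
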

\begin{proof}
As the field $V(\psi)$ is made partial isometries a.e., it is enough to check that it is measurable. For $A\in\ga$ and $\xi,\eta\in\ch_\om$, define the functions $F^{A,\xi,\eta}_N$, $F^{A,\xi,\eta}$ as follows.
\begin{align}
\label{almos}
&F^{A,\xi,\eta}_N(\psi):=\frac{1}{N}\sum_{n=0}^{N-1}\langle U^n_\psi\pi_\psi(A)U^n_\psi \xi(\psi),
\eta(\psi)\rangle\,,\nn\\
&F^{A,\xi,\eta}(\psi):=\langle V(\psi)\pi_\psi(A)\Psi\otimes \xi(\psi),\eta(\psi)\rangle\,.
\end{align}
Everything in \eqref{almos} is well defined up to a negligible set depending on $\xi,\eta\in\ch_\om$.
The $F^{A,\xi,\eta}_N$ are measurable by construction, and by Theorem \ref{cccc} also the
$F^{A,\xi,\eta}$ are measurable functions as point--wise limit of measurable functions. Let 
$\{A_i\mid i\in\bn\}\subset\ga$ be a norm dense subset. 
The assertion follows from Proposition II.2.1 of \cite{DX} as 
$\{\pi_\om(A_i)\Om\mid i\in\bn\}\subset\ch_\om$ provides a fundamental sequence of measurable vector fields.
\end{proof}
For each $B\in\cb(\ch_\om)$ we define $V_B:\ch_\om\longrightarrow\ch_\om$ as
\begin{equation}
\label{stocazz}
V_B\xi:=V\int_{\partial\ce_\bz(\ga)}^\oplus(B\Om)(\psi)\otimes\xi(\psi)\di\n(\psi)\,,\quad
\xi\in\ch_\om\,.
\end{equation}
By Lemma \ref{almos}, $\{V_B\mid B\in\cb(\ch_\om)\}\subset\gz'$.
Here there is the main theorem of the present section.
\begin{thm}
\label{cccc1}
Let $\big(\ga,\a,\om\big)$ be a $C^*$--dynamical system with $\ga$ separable and 
$s(\om)\in Z(\ga^{**})$. Fix 
$k\in\bz\backslash\{-1,0\}$ and $l\in\bz\backslash\{0\}$ such that $k_1=kl$, $k_2=(k+1)l$.
Suppose that $\pi_{\om}(\ga)'\cap\{U_\om^l\}'\subset \gz_\om$.
Then for each $X\in \gx$, 
\begin{equation*}
\mathop{\rm s\!-\!lim}_{N\to+\infty}
\frac{1}{N}\sum_{n=0}^{N-1}U_\om^{nkl}XU_\om^{nl}
=V_X\,,
\end{equation*}
where $V_X$ is the bounded linear operator given in \eqref{stocazz} corresponding to the unitary $U_\om^l$.
\end{thm}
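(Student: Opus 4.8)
The plan is to reduce the statement, fibre by fibre, to the already--established ergodic Theorem \ref{cccc} by means of the direct integral machinery of Section \ref{sec:red}, and then to recover the global strong convergence through a dominated convergence argument on the field of Hilbert spaces. First I would observe that $k_2-k_1=(k+1)l-kl=l$, so that the Cesàro means
$$
\frac1N\sum_{n=0}^{N-1}U_\om^{nkl}XU_\om^{nl}
$$
involve only the powers $U_\om^{nl}=(U_\om^l)^n$ and $U_\om^{nkl}=(U_\om^l)^{nk}$ of $U_\om^l$. Since $\gz=\pi_\om(\ga)'\cap\{U_\om^l\}'\subset\gz_\om$, the unitary $U_\om^l$ and its power $U_\om^{kl}$ lie in $\gz'$, hence are decomposable, and $\gz\subset\gz_\om$ forces $\gx\subset\gz'$, so every $X\in\gx$ is decomposable as well. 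Carrying out the reduction described at the beginning of the present section, Theorem \ref{minsepr} furnishes, over a measurable set $F\subset\partial\ce_\bz(\ga)$ of full measure, the simultaneous decompositions $U_\om^l=\int^\oplus U_\psi^l\,\di\n(\psi)$, $U_\om^{kl}=\int^\oplus U_\psi^{kl}\,\di\n(\psi)$ and $X=\int^\oplus X(\psi)\,\di\n(\psi)$, together with $s(\psi)\in Z(\ga^{**})$ and $U_\psi^{k_2-k_1}=U_\psi^l$ ergodic for a.e. $\psi$, the last fact because $\n$ is supported on the ergodic states thanks to separability.

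Next I would transport the ergodic result onto the fibres. For $X=A+B$ with $A\in M$, $B\in M'$ one has $X(\psi)=A(\psi)+B(\psi)\in\gx_\psi$, where $\gx_\psi$ is the norm closure of $\pi_\psi(\ga)''+\pi_\psi(\ga)'$; for a general $X\in\gx$ I would approximate in norm by elements of $\gx_0$ and use the identity $\|Y\|=\esssup_\psi\|Y(\psi)\|$ for decomposable $Y$, together with the norm--closedness of $\gx_\psi$, to conclude $X(\psi)\in\gx_\psi$ for a.e. $\psi$ (intersecting the countably many exceptional null sets). On each such fibre $(\ga,\a,\psi)$ is an ergodic system with $s(\psi)$ central and $U_\psi^{k_2-k_1}$ ergodic, so Proposition \ref{diam1} and Theorem \ref{cccc} apply with the admissible pair $(kl,(k+1)l)$ and give
$$
\mathop{\rm s\!-\!lim}_{N\to+\infty}\frac1N\sum_{n=0}^{N-1}U_\psi^{nkl}X(\psi)U_\psi^{nl}\,\zeta
=V(\psi)\big(X(\psi)\Psi\otimes\zeta\big),\qquad\zeta\in\ch_\psi,
$$
where $V(\psi)$ is exactly the field \eqref{addit11}. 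In view of \eqref{stocazz} and of $(X\Om)(\psi)=X(\psi)\Psi$, the right--hand side is precisely the fibre of $V_X\xi$ when $\zeta=\xi(\psi)$.

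Finally I would integrate. Since $\frac1N\sum_n U_\om^{nkl}XU_\om^{nl}$ is decomposable with fibre $\frac1N\sum_n U_\psi^{nkl}X(\psi)U_\psi^{nl}$, for every $\xi\in\ch_\om$ one has
$$
\Big\|\frac1N\sum_{n=0}^{N-1}U_\om^{nkl}XU_\om^{nl}\xi-V_X\xi\Big\|^2
=\int_{\partial\ce_\bz(\ga)}\Big\|\frac1N\sum_{n=0}^{N-1}U_\psi^{nkl}X(\psi)U_\psi^{nl}\xi(\psi)-V(\psi)\big(X(\psi)\Psi\otimes\xi(\psi)\big)\Big\|^2\di\n(\psi).
$$
By the fibre-wise convergence above the integrand tends to $0$ for a.e. $\psi$, and, using $\|U_\psi^{nkl}X(\psi)U_\psi^{nl}\xi(\psi)\|\le\|X\|\,\|\xi(\psi)\|$ and the analogous bound for the limit, it is dominated uniformly in $N$ by $4\|X\|^2\|\xi(\psi)\|^2$, which is $\n$--integrable with integral $4\|X\|^2\|\xi\|^2$. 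Lebesgue's dominated convergence theorem then yields the asserted strong convergence to $V_X$.

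The main obstacle I anticipate is exactly this interchange of limit and integral, and the measurability it presupposes: one must know that the $N$--th Cesàro field and the limiting field $V(\psi)$ are measurable---the latter being the content of Lemma \ref{cinqu}---in order to write the global squared norm as a fibre integral, and that the hypotheses of the ergodic theorem hold simultaneously on a single full--measure set of fibres. Both rest crucially on separability (genuine support of $\n$ on $\partial\ce_\bz(\ga)$, existence of fundamental sequences of measurable fields); once they are in place the uniform bound above makes the dominated convergence step routine.
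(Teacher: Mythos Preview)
Your proposal is correct and follows essentially the same route as the paper: reduction theory from Section~\ref{sec:red} to decompose everything over $\partial\ce_\bz(\ga)$, fibre--wise application of Theorem~\ref{cccc}, and Lebesgue dominated convergence to reassemble the global strong limit. The only organizational difference is that the paper first reduces to $l=1$ by passing to the system $(\ga,\a^l,\om)$, proves the result for $X\in\gx_0$ via the fibre argument, and then extends to $X\in\gx$ by a global $3\eps$--argument using $\|V_X-V_{X_\eps}\|\le\|X-X_\eps\|$; you instead keep general $l$ and push the approximation down to the fibres, arguing that $X(\psi)\in\gx_\psi$ a.e.\ so that Theorem~\ref{cccc} applies directly for every $X\in\gx$. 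Both variants are valid, and your dominating function $4\|X\|^2\|\xi(\psi)\|^2$ is in fact the careful choice.
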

\begin{proof}
By considering the automorphism $\a^l$ implemented on the GNS representation by $U^l_\om$, it is enough to prove the result for $l=1$ without loss of generality. First fix $X\in\gx_0$ and 
$\xi\in\ch_\om$.
We have by Theorems \ref{minsepr} and \ref{cccc},
$$
f_N(\psi):=\left\|\frac{1}{N}\sum_{n=0}^{N-1}U_\psi^{nk}XU_\psi^{n}\xi(\psi)
-V_X(\psi)\xi(\psi)\right\|^2\longrightarrow0\,, a.e.
$$
In addition, $0\leq f_N(\psi)\leq(2\|X\|\|\xi\|)^2\,,a.e.$
By Lebesgue Dominated Convergence Theorem, we get
\begin{align*}
\lim_{N\to+\infty}\bigg\|\frac{1}{N}\sum_{n=0}^{N-1}U_\om^{nk}XU_\om^{n}\xi
-V_X&\xi\bigg\|^2
=\lim_{N\to+\infty}\int_{\partial\ce_\bz(\ga)}f_N(\psi)\di\m(\psi)\\
=&\int_{\partial\ce_\bz(\ga)}\lim_{N\to+\infty}f_N(\psi)\di\m(\psi)=0\,.
\end{align*}
Let now $\xi\in\ch_\om$, $X\in\gx$ and $\eps>0$, then there exists $X_\eps\in\gx_0$ such that
$\|X-X_\eps\|<\frac{\eps}{3\|\xi\|}$. Notice that
$$
\|V_X-V_{X_\eps}\|=\|V_{X-X_\eps}\|\leq\|X-X_\eps\|\,.
$$
A standard $3\eps$--argument leads to
$$
\bigg\|\frac{1}{N}\sum_{n=0}^{N-1}U_\om^{nk}XU_\om^{n}\xi
-V_X\xi\bigg\|\leq\frac{2\eps}{3}
+\bigg\|\frac{1}{N}\sum_{n=0}^{N-1}U_\om^{nk}X_\eps U_\om^{n}\xi
-V_{X_\eps}\xi\bigg\|\,,
$$
and the proof follows by the first part.
\end{proof}
\begin{cor}
\label{fiberw}
Under the same hypotheses  of Theorem \ref{cccc1}, we get
$$
\lim_{N\to+\infty}\frac{1}{N}\sum_{n=0}^{N-1}\om\left(A_{0}\a^{nkl}(A_{1})
\a^{n(k+1)l}(A_{2})\right)
=\big\langle V_{\pi_\om(A_1)}\pi_\om(A_2)\Om,\pi_\om(A_0^*)\Om
\big\rangle\,.
$$
\end{cor}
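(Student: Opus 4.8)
The plan is to unwind the three--point correlation inside the GNS representation and then recognise the Cesaro mean controlled by Theorem \ref{cccc1}. Writing $\pi\equiv\pi_\om$ and $U\equiv U_\om$ for brevity, I would first use $\om(C)=\langle\pi(C)\Om,\Om\rangle$ together with $\pi(\a^m(A))=U^m\pi(A)U^{-m}$ to obtain, for each $n$,
\begin{align*}
&\om\!\left(A_0\a^{nkl}(A_1)\a^{n(k+1)l}(A_2)\right)\\
&\qquad=\big\langle\pi(A_0)\,U^{nkl}\pi(A_1)U^{-nkl}\,U^{n(k+1)l}\pi(A_2)U^{-n(k+1)l}\Om,\Om\big\rangle\,.
\end{align*}

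Next I would exploit the $\a$--invariance of $\om$: the vector $\Om$ is fixed by $U$, so the right--most factor disappears, $U^{-n(k+1)l}\Om=\Om$, while the two inner powers recombine as $U^{-nkl}U^{n(k+1)l}=U^{nl}$. Hence each summand collapses to $\big\langle\pi(A_0)U^{nkl}\pi(A_1)U^{nl}\pi(A_2)\Om,\Om\big\rangle$, and moving $\pi(A_0)$ onto the second slot of the inner product (using $\pi(A_0)^*=\pi(A_0^*)$) yields
\begin{align*}
&\frac1N\sum_{n=0}^{N-1}\om\!\left(A_0\a^{nkl}(A_1)\a^{n(k+1)l}(A_2)\right)\\
&\qquad=\bigg\langle\bigg(\frac1N\sum_{n=0}^{N-1}U^{nkl}\pi(A_1)U^{nl}\bigg)\pi(A_2)\Om,\pi(A_0^*)\Om\bigg\rangle\,.
\end{align*}

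At this point I would invoke Theorem \ref{cccc1} with the choice $X=\pi(A_1)\in M\subset\gx$: the operators $\tfrac1N\sum_{n=0}^{N-1}U^{nkl}\pi(A_1)U^{nl}$ converge in the strong operator topology to $V_{\pi(A_1)}$. Applied to the fixed vector $\pi(A_2)\Om$, this means precisely that the vectors $\tfrac1N\sum_{n=0}^{N-1}U^{nkl}\pi(A_1)U^{nl}\pi(A_2)\Om$ converge in norm to $V_{\pi(A_1)}\pi(A_2)\Om$; taking the inner product against the fixed vector $\pi(A_0^*)\Om$ then passes to the limit and delivers $\big\langle V_{\pi_\om(A_1)}\pi_\om(A_2)\Om,\pi_\om(A_0^*)\Om\big\rangle$, which is the asserted identity.

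I do not expect a genuine obstacle here, since all the analytic content already resides in Theorem \ref{cccc1} and the corollary is essentially a direct reading of the GNS data. The only points deserving care are the bookkeeping of the exponents --- where the invariance of $\Om$ eliminates $U^{-n(k+1)l}$ and the remaining powers recombine into the pair $(kl,l)$ matching the hypotheses $k_1=kl$, $k_2=(k+1)l$ --- and the observation that strong operator convergence is inherited by the relevant inner products, which is immediate.
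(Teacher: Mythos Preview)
Your argument is correct and follows essentially the same route as the paper: rewrite each summand via the GNS covariant representation as $\langle U^{nkl}\pi(A_1)U^{nl}\pi(A_2)\Om,\pi(A_0^*)\Om\rangle$ and then apply Theorem \ref{cccc1} with $X=\pi(A_1)$. The paper's proof is in fact terser than yours, merely recording this identification (with $X=\pi_\om(A_1)$, $\xi=\pi_\om(A_2)\Om$, $\eta=\pi_\om(A_0^*)\Om$) and invoking Theorem \ref{cccc1}.
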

\begin{proof}
The proof follows by Theorem \ref{cccc1} by noticing that
$$
\om\left(A_{0}\a^{nkl}(A_{1})\a^{n(k+1)l}(A_{2})\right)
=\big\langle U^{nkl}XU^{nl}\xi,\eta\big\rangle\,,
$$
where $X=\pi_\om(A_1)$, $\xi=\pi_\om(A_2)\Om$ and $\eta=\pi_\om(A_0^*)\Om$ are measurable and suare--integrable fields of operators and vectors.
\end{proof}
The result explained in the present section can be viewed in terms of convex combinations (i.e. direct integral) of diagonal measures. 
As it  plays no role in proving our results, we sketch the construction by leaving the technical details to the reader. 

For a generic $\f\in\cs(\ga)$, put $M_\f=\pi_{\f}(\ga)''$, 
$M_\f'=\pi_{\f}(\ga)''$, and consider the $C^*$--algebra denoted symbolically as 
$$
\gam:=\int^{\oplus}_{\partial\ce_\bz(\ga)}M_\psi\otimes_{{\rm max}} M'_\psi \di\m(\psi)\,.
$$
It is in fact enough to quotient and complete the concrete $*$--algebra
$$
\gam_0:=\bigg\{\int^{\oplus}_{{\partial\ce_\bz(\ga)}}A(\psi)\otimes B(\psi)\di\m(\psi)\mid 
A\in M_\om\,, B\in M'_\om\bigg\}\,,
$$
w.r.t. $\|X\|:=\sup\{\|\pi(X)\|\mid\pi\,\text{representation of}\,\gam_0\}$, see e.g. Definition IV.4.5 of
\cite{T}.
On $\gam$ it  is defined the automorphism $\b$ given on the generators as
$$
\b:=\int^{\oplus}_{\partial\ce_\bz(\ga)}\ad{}\!_{U_\psi^{k_1}}\otimes\ad{}\!_{U_\psi^{k_2}} \di\m(\psi)\,.
$$
The 
following two states are automatically well--defined. 
The first one, automatically invariant w.r.t. $\b$, arises as the convex combination (i.e. direct integral) of product states. It is given on the generators as
$$
\f(AB):=\int_{\partial\ce_\bz(\ga)}\langle A(\psi)\Psi,\Psi\rangle\langle B(\psi)\Psi,\Psi\rangle\di\m(\psi)\,.
$$
The second one, which corresponds to the generic measure for $(\gam,\b,\f)$, is nothing but
the convex combination of diagonal measures. It is given on the set of generators as
$$
\r(AB):=\int_{\partial\ce_\bz(\ga)}\langle A(\psi)B(\psi)\Psi,\Psi\rangle\di\m(\psi)\,.
$$

\section{a counterexample}
\label{xcouxs}

We construct an irreducible, ergodic $C^*$--dynamical system for which the three--point correlations do not converge for some element.
We start with a sequence $a\in\ell^\infty(\bn)$ such that the Cesaro means
$$
M_n(a):=\frac1n\sum_{k=0}^{n-1}a_k
$$
do not converge. It certainly exists just by taking for the $a_k$, sequences of zeroes and ones such that $M_n(a)$ oscillates between two values $0<\a<\b<1$. Extend the sequence $a$ to all of 
$\bz$ by putting $a_{-k}=a_k$ on negative elements. Define the operator $A\in\cb(\ell^2(\bz))$ with matrix--elements
\begin{equation*}
A_{ij}:=a_i\d_{i,-j}\,,\quad i,j\in\bz\,.
\end{equation*}
Consider the shift
$$
Ve_n:=e_{n+1}\,,\quad n\in\bz\,.
$$
By using Fourier Transform $\cf:L^2(\bt)\to\ell^2(\bz)$, we get $\cf^*V\cf=M_f$,
where $M_f$ is the multiplication operator on the unit circle $\bt$, by the function $f(z)=z$. Thus, the shift has absolutely continuous (w.r.t. the Lebesgue measure on $\bt$) spectrum, and 
\begin{equation}
\label{aantidd}
\lim_n\langle V^n\xi,\eta\rangle=0
\end{equation}
by Riemann--Lebesgue Lemma.
\begin{lem}
\label{nconve}
We have
$$
\frac1n\sum_{k=0}^{n-1}\langle V^kAV^ke_0,e_0\rangle=M_n(a)\,.
$$
\end{lem}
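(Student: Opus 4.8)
The plan is to evaluate the diagonal matrix element $\langle V^kAV^ke_0,e_0\rangle$ in closed form and recognise the Ces\`aro average of $(a_k)$. First I would read off the action of $A$ on the canonical basis from its matrix entries: since $A_{ij}=a_i\d_{i,-j}$, expanding $Ae_j=\sum_i A_{ij}e_i$ collapses the sum to the single index $i=-j$, giving $Ae_j=a_{-j}e_{-j}$. The essential structural point is that $A$ carries each $e_j$ to a scalar multiple of the \emph{reflected} basis vector $e_{-j}$; this is precisely the feature that allows the two conjugating copies of $V^k$ to cancel and return the orbit to $e_0$.

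Next I would track the three operators applied to $e_0$ in turn. Since $V^ke_0=e_k$, applying $A$ yields $Ae_k=a_{-k}e_{-k}$, and applying the final $V^k$ sends $e_{-k}$ back to $e_{-k+k}=e_0$. Hence $V^kAV^ke_0=a_{-k}e_0$, so that $\langle V^kAV^ke_0,e_0\rangle=a_{-k}$ for every $k\geq0$.

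Finally I would invoke the even extension under which the sequence $a$ was prolonged to $\bz$, namely $a_{-k}=a_k$, to rewrite the matrix element as $a_k$. Averaging over $0\leq k\leq n-1$ and dividing by $n$ then gives $\frac1n\sum_{k=0}^{n-1}a_k=M_n(a)$, which is the assertion. The computation is elementary and I anticipate no genuine obstacle; the only care needed is in the index bookkeeping — fixing the convention $A_{ij}=\langle Ae_j,e_i\rangle$ and keeping the direction of the shift straight — together with the observation that the even extension both renders $A$ self-adjoint and makes the final identity insensitive to the chosen matrix-element convention.
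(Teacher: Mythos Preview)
Your proof is correct and follows essentially the same direct computation as the paper. The only cosmetic difference is that the paper moves the leftmost $V^{k}$ across the inner product via unitarity, writing $\langle V^{k}AV^{k}e_{0},e_{0}\rangle=\langle Ae_{k},e_{-k}\rangle=A_{k,-k}=a_{k}$, whereas you apply the three operators to $e_{0}$ in succession; your choice of the convention $A_{ij}=\langle Ae_{j},e_{i}\rangle$ produces $a_{-k}$ at the intermediate step rather than $a_{k}$, but as you observe the even extension $a_{-k}=a_{k}$ renders this immaterial.
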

\begin{proof}
The assertion follows as
$$
\langle V^kAV^ke_0,e_0\rangle=\langle AV^ke_0,V^{-k}e_0\rangle
=\langle Ae_k,e_{-k}\rangle=A_{k,-k}=a_k\,.
$$
\end{proof}
Let $\ch:=\ell^2(\bz)\oplus\bc e$ where $e$ is an extra unit vector. Define $U:=V\oplus 1$. On 
$\ga:=\cb(\ch)$, put
$\a:=\ad_U$ and $\om:=\langle\,{\bf\cdot}\,e,e\rangle$. The main properties of the $C^*$--dynamical system 
$(\ga,\a,\om)$ are listed below:
\begin{itemize}
\item[(i)] the GNS quadruple for $(\ga,\a,\om)$ is precisely $(\ch\,, \id\,, U\,, e)$;
\item[(ii)]  the system is irriducible: $\pi_\om(\ga)'\equiv\ga'=\bc\idd$;
\item [(iii)] the system is mixing: $\lim_{n\to+\infty}\om(\a^n(X)Y)=\om(X)\om(Y)$, 
$X,Y\in\cb(\ch)$, by \eqref{aantidd}.
\end{itemize}
This implies that $(\ga,\a,\om)$ is ergodic, but $s(\om)\notin\ga^{**}$.
Let $B:=\langle\,{\bf\cdot}\,,e\rangle e_0$.
\begin{prop}
The three--point correlations
$$
\frac1n\sum_{k=0}^{n-1}\om(B^*\a^n(A)\a^{2n}(B))
$$
do not converge.
\end{prop}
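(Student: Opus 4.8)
The plan is to evaluate each summand $\om(B^{*}\a^{k}(A)\a^{2k}(B))$ explicitly and show that it equals $a_{k}$, so that the Cesàro average coincides with $M_{n}(a)$, which by construction oscillates between $\a$ and $\b$ and hence does not converge. Since $\om=\langle\,\cdot\,e,e\rangle$, the $k$--th summand is $\langle B^{*}\a^{k}(A)\a^{2k}(B)e,e\rangle$, and I would compute the vector $B^{*}\a^{k}(A)\a^{2k}(B)e$ in three steps, simply reading off its coefficient along $e$.

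First I would apply $\a^{2k}(B)=U^{2k}BU^{-2k}$ to $e$. Because $U$ fixes $e$ we have $U^{-2k}e=e$; the definition of $B$ gives $Be=e_{0}$; and $U^{2k}e_{0}=V^{2k}e_{0}=e_{2k}$. Hence $\a^{2k}(B)e=e_{2k}$. Next I would apply $\a^{k}(A)=U^{k}AU^{-k}$: here $U^{-k}e_{2k}=e_{k}$, the anti--diagonal operator gives $Ae_{k}=a_{-k}e_{-k}=a_{k}e_{-k}$ (using the symmetric extension $a_{-k}=a_{k}$), and $U^{k}e_{-k}=e_{0}$, so that $\a^{k}(A)\a^{2k}(B)e=a_{k}e_{0}$.

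Finally I would apply $B^{*}=\langle\,\cdot\,,e_{0}\rangle e$: since $\langle e_{0},e_{0}\rangle=1$ this yields $B^{*}(a_{k}e_{0})=a_{k}e$, whence $\langle B^{*}\a^{k}(A)\a^{2k}(B)e,e\rangle=a_{k}$. Averaging over $k$ therefore gives exactly $\frac{1}{n}\sum_{k=0}^{n-1}a_{k}=M_{n}(a)$, which does not converge, proving the claim.

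The argument is entirely elementary and presents no genuine obstacle; the only point demanding care is the index bookkeeping, namely verifying that the two shifts $U^{k}$ and $U^{2k}$ interlock with the anti--diagonal operator $A$ and the rank--one operators $B,B^{*}$ so that the vector returns precisely to the reference vector $e$ with coefficient $a_{k}$. This is the three--point analogue of Lemma \ref{nconve}, now routed through the auxiliary vector $e$ so that the non--normal, non--central vector state $\om$ selects the divergent sequence $\{a_{k}\}$.
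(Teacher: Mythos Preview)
Your proof is correct and follows essentially the same approach as the paper: both compute each summand $\om(B^*\a^k(A)\a^{2k}(B))$ and show it equals $a_k$, so that the Ces\`aro average is $M_n(a)$. The paper's write-up is slightly more compressed, passing directly to $\langle U^kAU^kBe,Be\rangle=\langle V^kAV^ke_0,e_0\rangle$ and then invoking Lemma~\ref{nconve}, whereas you track the vector through each operator application explicitly; the content is identical.
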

\begin{proof}
We get
\begin{align*}
\om(B^*\a^n(A)\a^{2n}(B))=&\langle U^nAU^nBe,Be\rangle
=\langle U^nAU^ne_0,e_0\rangle\\
=&\langle V^nAV^ne_0,e_0\rangle=a_n\,.
\end{align*}
The proof follows by Lemma \ref{nconve} as $M_n(a)$ does not converge by construction.
\end{proof}
We have just shown that we cannot drop the condition $s(\om)\in Z(\ga^{**})$.
In the situation under consideration, 
$\pi_\om(\ga)'\cap\{U_\om\}'$ being Abelian is the natural condition to provide the ergodic decomposition of invariant states.
It is of interest to see whether the conditions 
$s(\om)\in Z(\ga^{**})$ and $\pi_\om(\ga)'\cap\{U_\om\}'\subset\gz_\om$ are optimal. In other words, it is still open to check if there exists a counterexample for which $s(\om)\in Z(\ga^{**})$ and 
$\pi_\om(\ga)'\cap\{U_\om\}'\not\subset\gz_\om$, such that the Cesaro means 
\eqref{tpc}, with $k_1=kl$, $k_2=(k+1)l$, does not converge for some $k,l\in\bz$ and $A_i\in\ga$ in non ergodic situation. If there exist counterexamples for which 
$\pi_\om(\ga)'\cap\{U_\om\}'$ is Abelian but not in $\gz_\om$ such that the three--point correlations do not converge, we can find counterexamples with $\pi_\om(\ga)'\cap\{U_\om\}'$ non Abelian by using the tensor product construction.
However, it is also of interest to check whether the Nonconventional Ergodic Theorem holds true when $s(\om)\in Z(\ga^{**})$ and 
$\pi_\om(\ga)'\cap\{U_\om\}'\sim\bm_2(\bc)$, the last being the simplest case as above which does not arise from an elementary tensor product construction.

\section{Compact dynamical systems}

For the sake of completeness, we analyse the case of compact $C^*$--dynamical systems
$(\ga,\a,\om)$, that is when $\ch_\om$ is generated by the eigenvectors of $U_\om$, see e.g.
\cite{NSZ}. We start with the following
\begin{lem}
\label{comptk}
Let $k_1,k_2\in\bg$. For each $A\in\cb(\ch_\om)$, the series
$$
\sum_{\{v,w\in\s_{\mathop{\rm pp}}(U_\om)\mid v^{k_1}w^{k_2-k_1}=1\}}
E^{U_\om}_vAE^{U_\om}_w\,,
$$
converges in the strong operator topology to an operator $S(A)\in\cb(\ch_\om)$ with 
$\|S(A)\|\leq\|A\|$.
\end{lem}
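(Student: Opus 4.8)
The plan is to exploit the compactness hypothesis, which (as recalled at the start of this section) means that $\ch_\om$ is the orthogonal direct sum of the eigenspaces of $U_\om$; equivalently $\{E^{U_\om}_v\}_{v\in\s_{\mathop{\rm pp}}(U_\om)}$ is a family of mutually orthogonal selfadjoint projections with $\sum_{v\in\s_{\mathop{\rm pp}}(U_\om)}E^{U_\om}_v=I$ in the strong operator topology. First I would reorganise the doubly indexed family $\{E^{U_\om}_vAE^{U_\om}_w\}$ into orthogonal blocks. Writing $m:=k_2-k_1$, for $c\in\bt$ set
$$
\mathcal R_c:=\{v\in\s_{\mathop{\rm pp}}(U_\om):v^{k_1}=c^{-1}\}\,,\qquad
\mathcal T_c:=\{w\in\s_{\mathop{\rm pp}}(U_\om):w^{m}=c\}\,,
$$
and $R_c:=\sum_{v\in\mathcal R_c}E^{U_\om}_v$, $T_c:=\sum_{w\in\mathcal T_c}E^{U_\om}_w$. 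Since $(k_1,k_2)\in\bg$ forces $k_1\neq0$ and $m\neq0$, each equation $v^{k_1}=c^{-1}$, resp. $w^{m}=c$, has at most $|k_1|$, resp. $|m|$, solutions on $\bt$, so every block $\mathcal R_c\times\mathcal T_c$ is \emph{finite}. A pair $(v,w)$ satisfies $v^{k_1}w^{m}=1$ precisely when $w^{m}=v^{-k_1}=:c$, whence the index set of the series is the disjoint union $\bigsqcup_c(\mathcal R_c\times\mathcal T_c)$, and the contribution of the block $c$ is the finite sum $R_cAT_c$.

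The crucial structural point is a double orthogonality: distinct values of $c$ give disjoint sets $\mathcal R_c$ and disjoint sets $\mathcal T_c$, so $\{R_c\}_c$ is a family of projections with mutually orthogonal ranges, and likewise $\{T_c\}_c$. I would therefore \emph{define} $S(A):=\sum_cR_cAT_c$ and read off both convergence and the norm bound from this block form. For $\xi\in\ch_\om$, orthogonality of the $T_c$ gives $\sum_c\|T_c\xi\|^2\le\|\xi\|^2$, while orthogonality of the ranges of the $R_c$ gives
$$
\Big\|\sum_cR_cAT_c\xi\Big\|^2=\sum_c\|R_cAT_c\xi\|^2\le\|A\|^2\sum_c\|T_c\xi\|^2\le\|A\|^2\|\xi\|^2\,.
$$
This simultaneously shows that the orthogonal series $\sum_cR_cAT_c\xi$ converges in $\ch_\om$ and yields $\|S(A)\|\le\|A\|$.

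It remains to verify that the net of finite partial sums over the whole index set converges to this $S(A)$, which is the only place I expect genuine care to be needed. Given $\eps>0$, I would use $\sum_c\|T_c\xi\|^2<\infty$ to choose a finite set of blocks $C_0$ with $\sum_{c\notin C_0}\|T_c\xi\|^2$ arbitrarily small, and put $F_0:=\bigcup_{c\in C_0}(\mathcal R_c\times\mathcal T_c)$, which is finite because each block is finite. For any finite $F\supset F_0$, the part of the partial sum $S_F(A)\xi$ over blocks in $C_0$ equals $\sum_{c\in C_0}R_cAT_c\xi$, while the remaining part is supported on blocks $c\notin C_0$; grouping the latter by $v$ and using orthogonality across blocks together with $|\mathcal R_c|\le|k_1|$ bounds its squared norm by $|k_1|\,\|A\|^2\sum_{c\notin C_0}\|T_c\xi\|^2$. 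Combining this with $\|\sum_{c\notin C_0}R_cAT_c\xi\|^2\le\|A\|^2\sum_{c\notin C_0}\|T_c\xi\|^2$, a routine $\eps$-estimate gives $\|S_F(A)\xi-S(A)\xi\|\to0$ along the net, completing the proof.

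The main obstacle is exactly this passage from the manifestly convergent block series to unconditional (net) convergence of the original doubly indexed series: the finiteness of the blocks, which is forced by $(k_1,k_2)\in\bg$, is precisely what removes the within-block convergence subtlety, leaving only the tail over blocks, controlled by the $\ell^2$-summability $\sum_c\|T_c\xi\|^2\le\|\xi\|^2$ coming from compactness.
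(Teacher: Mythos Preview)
Your argument is correct. Both your proof and the paper's exploit the mutual orthogonality of the spectral projections $E^{U_\om}_v$, but you organise the sum differently. The paper fixes the first index $v$ and parametrises the admissible $w$'s as the $|k_2-k_1|$ branches $w=f_j(v)$ of $w^{k_2-k_1}=v^{-k_1}$; using only the orthogonality of the $E^{U_\om}_v$'s, its displayed estimate gives
\[
\Big\|\sum_{(v,w)}E^{U_\om}_vAE^{U_\om}_w\xi\Big\|^2\le|k_2-k_1|\,\|A\|^2\|\xi\|^2\,,
\]
after which the sharper bound $\|S(A)\|\le\|A\|$ is simply asserted. Your regrouping by the common value $c=w^{k_2-k_1}=v^{-k_1}$ reveals a \emph{double} orthogonality---both $\{R_c\}_c$ and $\{T_c\}_c$ are orthogonal families of projections---and this yields $\|S(A)\|\le\|A\|$ in one line. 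You also spell out the passage from convergence of the block series $\sum_cR_cAT_c\xi$ to net convergence of the original doubly indexed family, which the paper leaves implicit. The two approaches are close in spirit, but your block decomposition is the cleaner bookkeeping and delivers the stated constant without extra work.
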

\begin{proof}
Let  $\{f_j\mid j=0,\dots, |k_2-k_1|-1\}$ be the functions describing the $|k_2-k_1|$ solutions of the equation $v^{k_1}w^{k_2-k_1}=1$ w.r.t. the unknown $w\in\bt$.
By reasoning as in Lemma 2.2 of \cite{F2}, we get
\begin{align*}
\bigg\|\sum_{\{v,w\mid v^{k_1}w^{k_2-k_1}=1\}}
E^{U_\om}_vAE^{U_\om}_w\xi\bigg\|^2
\leq&\|A\|^2\sum_v\bigg(\sum_j\|E^{U_\om}_{f_j(v)}\xi\|^2\bigg)\\
\leq&|k_2-k_1|\|A\|^2\|\xi\|^2\,.
\end{align*}
Thus, the series under consideration is absolutely summable in $\ch_\om$ for each 
$A\in\cb(\ch_\om)$ and $\xi\in\ch_\om$ with norm less than $\|A\|\|\xi\|$ and the proof follows.
\end{proof}
\begin{thm}
Suppose that the $C^*$--dynamical system $(\ga,\a,\om)$ is compact. Then for each 
$A\in\cb(\ch_\om)$,
$$
\mathop{\rm s\!-\!lim}_{N\to+\infty}
\frac{1}{N}\sum_{n=0}^{N-1}U_\om^{nk_1}XU_\om^{n(k_2-k_1)}
=S(A)\,.
$$
\end{thm}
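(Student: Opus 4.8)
The plan is to first establish the convergence on eigenvectors of $U_\om$, and then extend to all of $\ch_\om$ by linearity, density, and the uniform bounds (here $X$ and $A$ denote the same operator). Writing $U:=U_\om$, $m:=k_2-k_1$, and $C_N(A):=\frac1N\sum_{n=0}^{N-1}U^{nk_1}AU^{nm}$, I note at the outset the trivial estimate $\|C_N(A)\|\leq\|A\|$, which together with $\|S(A)\|\leq\|A\|$ from Lemma \ref{comptk} will power the final approximation step. Since the system is compact, $\ch_\om$ is generated by the eigenvectors of $U$, so their linear span is dense and $\sum_{z\in\s_{\mathop{\rm pp}}(U)}E^U_z=I$ in the strong operator topology.

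First I would carry out the computation on a single eigenvector $\xi_w$ with $U\xi_w=w\xi_w$. Expanding $A\xi_w=\sum_{v}E^U_v A\xi_w$ in eigenspaces and using $U^{nk_1}E^U_v A\xi_w=v^{nk_1}E^U_v A\xi_w$, the finiteness of the sum over $n$ permits interchanging the two summations to obtain
\begin{equation*}
C_N(A)\xi_w=\sum_{v\in\s_{\mathop{\rm pp}}(U)}c_N(v)\,E^U_v A\xi_w\,,\qquad
c_N(v):=\frac1N\sum_{n=0}^{N-1}\big(v^{k_1}w^{m}\big)^n\,.
\end{equation*}
Since $v^{k_1}w^{m}\in\bt$, the scalar von Neumann average gives $c_N(v)\to 1$ when $v^{k_1}w^{m}=1$ and $c_N(v)\to 0$ otherwise. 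Taken term by term this produces $\sum_{\{v\,:\,v^{k_1}w^{m}=1\}}E^U_v A\xi_w=\sum_{\{v\,:\,v^{k_1}w^{m}=1\}}E^U_v A E^U_w\xi_w=S(A)\xi_w$, exactly the value of $S(A)$ on $\xi_w$.

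The step that requires care — and which I expect to be the main obstacle — is justifying the passage to the limit inside the infinite sum, since $\s_{\mathop{\rm pp}}(U)$ need not be finite. This I would handle by a tail estimate in the same spirit as Lemma \ref{comptk}: the vectors $\{E^U_v A\xi_w\}_v$ are mutually orthogonal with $\sum_v\|E^U_v A\xi_w\|^2=\|A\xi_w\|^2<\infty$, and $|c_N(v)|\leq 1$ uniformly in $N$. Given $\eps>0$, I choose a finite set $S_0$ of eigenvalues carrying all but $\eps^2$ of this norm; by orthogonality the tail contributes at most $\big(\sum_{v\notin S_0}\|E^U_v A\xi_w\|^2\big)^{1/2}<\eps$ to both $C_N(A)\xi_w$ and $S(A)\xi_w$, while the finite part over $S_0$ converges by the scalar limit above. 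Hence $\mathop{\rm s\!-\!lim}_N C_N(A)\xi_w=S(A)\xi_w$, and by linearity the same holds on the dense linear span of the eigenvectors.

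Finally I would remove the density restriction. For arbitrary $\xi\in\ch_\om$ and $\eps>0$, pick $\xi_0$ in the span of eigenvectors with $\|\xi-\xi_0\|$ small; the triangle inequality
\begin{equation*}
\|C_N(A)\xi-S(A)\xi\|\leq\|C_N(A)(\xi-\xi_0)\|+\|C_N(A)\xi_0-S(A)\xi_0\|+\|S(A)(\xi_0-\xi)\|
\end{equation*}
bounds the outer two terms by $2\|A\|\,\|\xi-\xi_0\|$ via the uniform estimates, while the middle term tends to $0$ by the previous step. A standard $3\eps$ argument then yields $\mathop{\rm s\!-\!lim}_{N\to+\infty}C_N(A)=S(A)$ on all of $\cb(\ch_\om)$, completing the proof.
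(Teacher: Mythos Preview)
Your proof is correct and follows essentially the same strategy as the paper: reduce to finite linear combinations of eigenvectors via a $3\eps$ argument using the uniform bounds $\|C_N(A)\|,\|S(A)\|\leq\|A\|$, then compute the limit on eigenvectors. The paper streamlines your eigenvector step by applying the operator form of von Neumann's mean ergodic theorem directly to the unitary $w^{m}U^{k_1}$, so that $\frac{1}{N}\sum_{n=0}^{N-1}(w^{m}U^{k_1})^{n}A E^U_w\xi\to\sum_{\{v:\,v^{k_1}w^{m}=1\}}E^U_vA E^U_w\xi$ in one stroke, avoiding your intermediate expansion of $A\xi_w$ into eigenspaces and the accompanying tail estimate.
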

\begin{proof}
By Lemma \ref{comptk} and a standard 3--$\eps$ argument, we can reduce the matter to the case when
$\xi=\sum_w E^{U_\om}_w\xi$ is a finite combination of eigenvectors of $U_\om$. We compute, by taking into account von Neumann Ergodic Theorem,
\begin{align*}
\frac{1}{N}\sum_{n=0}^{N-1}U_\om^{nk_1}AU_\om^{n(k_2-k_1)}&\sum_w E^{U_\om}_w\xi
=\sum_w\bigg(\frac{1}{N}
\sum_{n=0}^{N-1}\big(w^{k_2-k_1}U_\om^{k_1}\big)^n\bigg)AE^{U_\om}_w\xi\\
\longrightarrow&\sum_{\{v,w\mid v^{k_1}w^{k_2-k_1}=1\}}
E^{U_\om}_vAE^{U_\om}_w\xi\equiv S(A)\xi
\end{align*}
\end{proof}

As an immediate consequence, we get for the three--point correlations of compact 
$C^*$--dynamical systems,
$$
\lim_{N\to+\infty}\frac{1}{N}\sum_{n=0}^{N-1}\om\left(A_{0}\a^{nk_1}(A_{1})
\a^{nk_2}(A_{2})\right)
=\left\langle S(\pi(A_{1}))\pi(A_{2})\Om,\pi(A^{*}_{0})\Om\right\rangle\,.
$$

\end{document}